\title{\Large{A flower theorem in $\CC^n$}}
\author{Kémo Morvan}
\address{Université Paris Cité, Sorbonne Université, CNRS, IMJ-PRG, F-75013 Paris, France.}
\email{kmorvan@imj-prg.fr}
\subjclass{32A10, 32H50, 37F80.}
\theoremstyle{plain}
\newtheorem{mainthm}{Theorem}
\newtheorem{thm}{Theorem}[section]
\newtheorem{lem}[thm]{Lemma}
\newtheorem{prop}[thm]{Proposition}
\theoremstyle{definition}
\newtheorem{definition}[thm]{Definition}
\newtheorem{rem}[thm]{Remark}
\newcommand{\CC}{\mathbb{C}}
\newcommand{\RR}{\mathbb{R}}
\newcommand{\NN}{\mathbb{N}}
\newcommand{\ZZ}{\mathbb{Z}}
\newcommand{\x}{\mathbf{x}}
\newcommand{\y}{\mathbf{y}}
\newcommand{\w}{\mathbf{w}}
\newcommand{\Alp}{\boldsymbol{\alpha}}
\newcommand{\B}{\boldsymbol{B}}
\DeclareMathOperator{\Ree}{Re}
\DeclareMathOperator{\Imm}{Im}
\DeclareMathOperator{\ord}{ord}
\newcommand{\abs}[1]{\left|#1\right|}
\newcommand{\scal}[1]{\langle#1\rangle}
\newcommand{\norm}[1]{\lVert#1\rVert}
\newcommand{\aM}{\langle a,M \rangle}
\newcommand{\jM}{\mathcal{M}}
\begin{document}
	
	\maketitle
	
	\begin{abstract}
		We prove an analog of the flower theorem for non-degenerate reduced tangent to the identity germs that fix the coordinate hyperspaces in any dimension.
	\end{abstract}
	
	\tableofcontents
	
	\section*{Introduction}\label{sect:Intro}
	
	Let $f : (\CC^n, 0) \to (\CC^n, 0)$ be a germ of biholomorphism tangent to the identity, that is, its differential at $0$ is the identity.
	
	In dimension one, the dynamics of such germs are completely described by Leau-Fatou flower theorem \cite{Fatou1920}, \cite{Leau}, which ensures the existence of simply connected domains, called petals, with $0$ in their boundary, covering a punctured neighbourhood of the origin, which are invariant under $f$ or $f^{-1}$ and so that we can conjugate $f$ to a translation on each domain.
	
	There is no equivalent to this result in higher dimensions: one cannot, in general, give such a complete description of the dynamics of $f$ around the origin. However, we can still search for higher dimensional generalizations of the petals, called parabolic manifolds. They can be tangent to complex directions at 0, called characteristic directions. Let $H$ be the homogeneous part of smallest degree in $f - id$; then a direction $[v]$ in $\mathbb{P}^{n-1}$ is said to be characteristic if there exists $\lambda \in \CC$ such that $H(v) = \lambda v$. Such a direction is said to be non-degenerate if $[v]$ is a fixed point for the action of $H$ on $\mathbb{P}^{n-1}$, that is $\lambda \neq 0$, and degenerate if $[v]$ is an indeterminacy point ($\lambda = 0$). In 1998, Hakim \cite{Hakim} showed that for any non-degenerate characteristic direction, there exists a tangent invariant parabolic curve.
	
	In dimension two, the question of finding parabolic manifolds has also been thoroughly investigated, most notably in \cite{EcalleT3}, \cite{Abate01}, \cite{ABT04}, \cite{BMCLH}, \cite{LHRRSS}, \cite{LHRSV}, \cite{Molino}, \cite{Rong15}, \cite{Vivas}. In 2001, Abate showed in \cite{Abate01} that any tangent to the identity map either has a curve of fixed points or admits parabolic curves at the origin.
	
	The proof of this last result relies on Camacho-Sad's index, introduced in \cite{CamachoSad82}, and on a resolution theorem for tangent to the identity maps. This theorem, which is a consequence of a similar result for vector fields due to Seidenberg \cite{Seidenberg}, shows that for any tangent to the identity map $f$, there exists a finite sequence of blow-ups $\pi : (M, E) \to (\CC^2, 0)$ with $E = \pi^{-1} (0)$ such that the lift $\tilde{f} : (M,E) \to (M,E)$ of $f$ fixes $E$ pointwise and satisfies that for every $p \in E$, the germ of $\tilde{f}$ at $p$ is analytically conjugated to one of the following reduced models:
	\[
	\begin{aligned}
		{\rm (i)} \quad \tilde{f} (x,y) &= \left(x+ x^M y^N [1 + A(x,y)], \right. & & \left. y + x^M y^N B(x,y)\right)\\
		{\rm (ii)} \quad \tilde{f} (x,y) &= \left(x+ x^{M+1} y^N [a + A(x,y)], \right. & &\left. y + x^M y^N \left[by + B(x,y)\right]\right)\\
		{\rm (iii)} \quad  \tilde{f} (x,y) &= \left(x+ x^M y^N \left[x + A(x,y)\right],\right. & & \left.y + x^M y^N B(x,y)\right),
	\end{aligned}
	\]
	where $(x,y)$ are local coordinates such that $E$ is either $\{0\}$, or locally equal to either $\{x=0\}$, $\{y=0\}$ or $\{xy=0\}$, and $M$, $N \in \mathbb{N}$, $a$, $b \in \mathbb{R}$ and $A$ and $B \in \mathbb{C}\{x,y\}$ satisfy, in each case:
	\begin{enumerate}[label=(\roman*)]
		\item \emph{Non-singular case}: $(M, N) \notin \{(0, 0), (1,0)\}$, $\ord A \geq 1$ and, if $N \geq 1$ then $B \in (y)$.
		\item \emph{$\star$1 case}: $M \geq 1$, $ab \neq 0$, $a/b \notin \mathbb{Q}_{>0}$,  $\ord A \geq 1$, $\ord B \geq 2$ and, if $N \geq 1$ then $B \in (y)$.
		\item \emph{$\star$2 case}: $M + N \geq 1$, $\ord A,\, \ord B \geq 2$, if $M \geq 1$ then	$A \in (x)$, if $N \geq 1$ then $B \in (y)$, and the function germs $x + A$ and $B$ have no common factors.
	\end{enumerate}
	
	The dynamics of biholomorphisms of the form $(i)$ where described by Abate \cite[Proposition 2.1]{Abate2001} in the case where $M=0$ and by Brochero-Mart\'inez \cite[Proposition 5.3]{BrocheroMartinez} when $N=0$. More recently, dynamics of biholomorphisms of the form (ii) have been studied by L\'opez-Hernanz and Rosas \cite{LHR}, and case (iii) is currently being investigated.
	
	\begin{rem}\label{rem:X}\hfill
		\begin{enumerate}[label=(\Roman*)]
			\item If $E$ contains $\{x=0\}$, then $M\geq 1$; and if $E$ contains $\{y=0\}$, then $N\geq 1$.
			\item If one considers the saturation $X$ of the infinitesimal generator (\cite{Binyamini}) of the germ of $f$ at $p$, then: case (i) corresponds to a non-singular vector field $X$; case (ii) occurs when $X$ has an non-degenerate singularity at $p$; and case (iii) occurs when $X$ has a saddle-node singularity at $p$.
			\item Consider case $(ii)$ under the extra hypothesis that $N\geq 1$. Then $\tilde{f}$ fixes two one-dimensional manifolds ($\{x=0\}$ and $\{y=0\}$).
			
			\item Consider case $(ii)$ under the extra hypothesis that $N= 0 $. In this case, $\tilde{f}$ only fixes $\{x=0\}$ and has a formal invariant curve tangent to $\{y=0\}$. Nevertheless, L\'opez-Hernanz and Rosas exploit the dimension $2$ hypothesis, in order to perform a logarithmic change of coordinates, inspired by Hakim, which allows them to study the local dynamic from the same normal form under the additional assumption that $\{y=0\}$ is an invariant curve. 
			
		\end{enumerate}
	\end{rem}
	
	Resolution of vector fields is also known in dimension 3 (\cite{Panazzolo}, \cite{MP13}), but there is however no equivalent to the resolution theorem for biholomorphisms in general yet.
	
	In this paper, we are interested in the non-degenerate case in arbitrary dimensions: we consider biholomorphisms of the form:
	\begin{equation}\label{eq:f}
		f (x_1, \ldots, x_n) = \Bigg(x_1 \Big(1+ \x^M \big(a_1 + A_1 (x_1, \ldots, x_n)\big)\Big), \ldots, x_n \Big(1+ \x^M \big(a_n + A_n (x_1, \ldots, x_n)\big)\Big)\Bigg)
	\end{equation}
	where $\x^M = x_1^{M_1} \ldots x_n^{M_n}$ with $M_i \in \NN$ for all $i$, $a_i \in \CC^*$ and $A_i : (\CC^n, 0) \to (\CC, 0)$ a holomorphic map for all $i$.
	
	This form generalizes the form (ii) obtained in the resolution theorem in dimension two when $N > 0$, when the exceptional divisor is defined by $\{xy = 0\}$. If we consider the saturation $X$ of the infinitesimal generator of $\tilde{f}$ at the origin, then it has a singularity at $0$ and its linear part is diagonal and invertible. The normal form also implies that $X$ fixes $n$ hyperplanes given by $\{x_i = 0\}$.
	
	Our main result, see Theorem \ref{thm:conv} below, is similar to the one obtained by L\'opez-Hernanz and Rosas in \cite{LHR}: if $f$ satisfies $\Ree \left(\frac{a_i}{\aM} \right) > 0$ for all $i$, we build invariant domains for $f$ or $f^{-1}$ where the forward or backward orbits converge to $0$ and which, together with the fixed set, cover a neighborhood of the origin. 
	We also build Fatou coordinates on these domains, that is we conjugate $f$ with $(z, w_1, \ldots, w_{n-1}) \mapsto (z+1, w_1, \ldots, w_{n-1})$.

	More precisely, set $d = \gcd (M)$, $m_i = \frac{M_i}{d}$ and $\bar{m} = \# \{i, m_i = 0\}$; and define $T_j : \CC^n \to \CC^n$ by $T_j (z_1, \ldots, z_n) = (z_1 - j, z_2, \ldots, z_n)$. Our main result is:
	
	\begin{mainthm}\label{thm:conv}
		Let f be a biholomorphism of form \eqref{eq:f} and such that for all $i \in \{1, \ldots, n\}$, we have $\Ree \left(\frac{a_i}{\aM} \right) > 0$. Then in any neighbourhood of the origin there exists $d$ pairwise disjoint connected open sets $\Omega^{+}_0, \ldots, \Omega^{+}_{d-1}$ with $0 \in \partial \Omega^+_k$ for all $k$ and  $d$ pairwise disjoint connected open sets $\Omega^{-}_0, \ldots, \Omega^{-}_{d-1}$ with $0 \in \partial \Omega^-_k$ for all $k$, such that the following assertions hold:
		\begin{enumerate}
			\item The sets $\Omega^+_k$ are invariant by $f$ and $f^{\circ j} \to 0$ as $j \to +\infty$ compactly on $\Omega^+_k$ for all k, and the sets $\Omega^-_k$ are invariant by $f^{-1}$ and $f^{\circ -j} \to 0$ as $j \to +\infty$ compactly on $\Omega^-_k$ for all k.
			\item The sets $\Omega^{+}_0, \ldots, \Omega^{+}_{d-1}, \Omega^{-}_0, \ldots, \Omega^{-}_{d-1}$ together with the fixed set $\{\x^M = 0\}$ form a neighbourhood of the origin.
			\item For each $k$, there exists biholomorphisms $\phi^+_k : \Omega^+_k \to W^+_k \in \CC^n$ and $\phi^-_k : \Omega^-_k \to W^-_k \in \CC^n$ with the following properties :
			\begin{itemize}
				\item $\phi^+_k$ and $\phi^-_k$ conjugate $f$ to $(z, w_1, \ldots, w_{n-1}) \to (z+1, w_1, \ldots, w_{n-1})$,
				\item the sets $W^+_k$ and $W^-_k$ satisfy:
				$$
				\bigcup_{\pm j \in \NN} T_j \left(W^{\pm}_k \right)= \CC \times (\CC^*)^{n-\bar{m}-1} \times \CC^{\bar{m}}.
				$$
			\end{itemize}
		\end{enumerate}
	\end{mainthm}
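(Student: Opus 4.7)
The plan is to reduce the problem to a 1-dimensional parabolic dynamics in the monomial $v=\prod_i x_i^{m_i}$ (with $m_i=M_i/d$, so $v^d=\x^M$) and then to lift the resulting Leau--Fatou petal picture to $\CC^n$ using $n-1$ near-invariants of the linearised vector field. A direct computation yields
\begin{equation*}
v\circ f = v + \langle a,m\rangle\,v^{d+1} + \text{higher-order terms},\qquad \langle a,m\rangle = \aM/d\neq 0,
\end{equation*}
so along orbits the sequence $v_j=v(f^{\circ j}x)$ satisfies a perturbed 1D parabolic recurrence of multiplicity $d+1$. Classical Leau--Fatou for the 1D model $v\mapsto v+\langle a,m\rangle v^{d+1}$ furnishes $d$ attracting and $d$ repelling petals $P_k^\pm\subset\CC$ with Fatou coordinates $\zeta_k^\pm$ conjugating the model dynamics to $\zeta\mapsto\zeta+1$, and a standard perturbative argument extends this to the $v$-orbits of $f$.

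For the transverse directions, fix some $i_0$ with $m_{i_0}\neq 0$, set $\xi_i=\log x_i$ on the universal cover of $(\CC^*)^n$, and define $w_i=\xi_i/a_i-\xi_{i_0}/a_{i_0}$ for $i\neq i_0$; for the $\bar{m}$ indices with $m_i=0$ (whose hyperplanes $\{x_i=0\}$ are not contained in the fixed set $\{\x^M=0\}$) we replace $w_i$ by $x_i$ itself. The explicit form of $f$ gives $(\xi_i\circ f-\xi_i)/a_i=\x^M+(A_i/a_i)\x^M+O(\x^{2M})$, hence $w_i\circ f - w_i = O(\|x\|\cdot\x^M)$, so the $w_i$ are near-invariants of $f$. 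The hypothesis $\Ree(a_i/\aM)>0$ enters crucially: taking $\log|x_i\circ f|$ and telescoping along the orbit yields
\begin{equation*}
|x_i(f^{\circ j}x)|\;\asymp\;j^{-\Ree(a_i/\aM)},
\end{equation*}
so every $x_i$ decays polynomially; combined with the 1D estimate $\x^M(f^{\circ j}x)\asymp 1/j$, this makes the increments $w_i\circ f^{\circ(j+1)}-w_i\circ f^{\circ j}$ summable in $j$.

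Define $\Omega_k^\pm$ as the connected component, lying above $P_k^\pm$ via $v$ (after a coherent choice of $d$-th root), of the set of $x$ such that $f^{\circ\pm j}x\to 0$ as $j\to\infty$. Invariance and attraction are immediate from the estimates above. The Fatou coordinate is then the standard telescoping limit
\begin{equation*}
\phi_k^+(x) = \lim_{j\to\infty}\Big(\zeta_k^+(v(f^{\circ j}x))-j,\;w_2(f^{\circ j}x),\ldots,w_n(f^{\circ j}x)\Big),
\end{equation*}
which converges by the summability above and conjugates $f$ to $(z,w)\mapsto(z+1,w)$ by construction; the analogue with $f^{-1}$ gives $\phi_k^-$. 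For covering: any $x$ with $\x^M\neq 0$ has $v(x)\neq 0$, and the argument of $v(x)$ determines which of the $2d$ 1D petals eventually contains its orbit, placing $x$ in the corresponding $\Omega_k^\pm$. The image $W_k^\pm$ is then a half-plane-like sector in the $z$-coordinate (whose $\ZZ$-translates tile $\CC$), $\CC^*$ factors for each $i\neq i_0$ with $m_i\neq 0$ (since $x_i\neq 0$ on the petal), and $\CC$ factors for the $\bar{m}$ indices with $m_i=0$; this yields $\bigcup_j T_j(W_k^\pm)=\CC\times(\CC^*)^{n-\bar{m}-1}\times\CC^{\bar{m}}$.

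The principal technical difficulty is uniform control of orbits approaching the fixed set $\{\x^M=0\}$: the polynomial decay of $|x_i|$ must be uniform enough over each petal that the Fatou coordinate limit converges holomorphically and injectively. It is precisely the condition $\Ree(a_i/\aM)>0$ for all $i$ that supplies the Lyapunov-type quantity making this possible; without it some $x_i$ could drift laterally along the fixed set, causing the transverse coordinates to escape and the Fatou construction to break down. A secondary subtlety is the coherent branch choice for the $\xi_i$ along each petal, which is responsible for the multiplicity $d$ (rather than $1$) of attracting and repelling domains and for the precise shape of $W_k^\pm$ in the transverse directions.
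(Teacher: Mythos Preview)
Your overall strategy --- reduce to a one-dimensional parabolic problem in $v=\x^m$, lift the Leau--Fatou petals, and complete with $n-1$ near-invariants of the linearised flow --- is the paper's strategy as well. But two of your steps, as written, do not go through.

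The first gap is the convergence of the $z$-coordinate of your Fatou map. The increment $\zeta_k^+(v_{j+1})-\zeta_k^+(v_j)-1$ is of size $O(\|f^{\circ j}x\|)$: the perturbation of the $v$-recursion coming from the $A_i$ is $v^{d+1}\cdot O(\|x\|)$ while $(\zeta_k^+)'\sim v^{-d-1}$, so the error along the orbit is $O(j^{-\gamma'})$ with $\gamma'=\min_i\Ree(a_i/\aM)$. This is summable only when $\gamma'>1$, which is not part of the hypothesis. Your $w$-increments carry an extra factor $\x^M\asymp j^{-1}$ and are therefore summable; the $z$-increments do not. The paper resolves this in two stages: it first promotes the near-invariants to \emph{exact} invariants $\psi_i=\lim_j g_I\circ f^{\circ j}$ with $g_I=\x^I(\x^m)^{d\langle a,I\rangle}$, so that after $\Phi_\ell=(1/\x^M,\psi_2,\dots,\psi_n)$ the map becomes $(z,\w)\mapsto(z+1+\tilde h(z,\w),\w)$ with $\tilde h=O(|z|^{-\gamma/d})$; then, for each fixed $\w$, it builds the Fatou coordinate via the \emph{relative} limit $\beta_j(z)=\tilde f_{\w}^{\circ j}(z)-\tilde f_{\w}^{\circ j}(p)$, whose convergence only needs $\partial_z\tilde h=O(|z|^{-1-\gamma/d})$ (Cauchy's estimate supplies the missing $|z|^{-1}$). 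The base-point dependence in $p$ is then eliminated by solving a Cousin-I problem on the Stein manifold $(\CC^*)^{n-\bar m-1}\times\CC^{\bar m}$.

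The second gap concerns your transverse coordinates $w_i=\log x_i/a_i-\log x_{i_0}/a_{i_0}$. On the petals only $\arg(\x^m)$ is constrained, not the individual $\arg x_i$ for $m_i>0$, so these $w_i$ exist only on the universal cover; passing to it changes the target space and leaves both the injectivity of $\phi_k^+$ and the identification $\bigcup_jT_j(W_k^\pm)=\CC\times(\CC^*)^{n-\bar m-1}\times\CC^{\bar m}$ unaccounted for. The paper sidesteps this by choosing an integer matrix $\mathcal M\in GL_n(\ZZ)$ with first row $m$ and using the single-valued monomial-type invariants $g_{\mathcal M_i}=\x^{\mathcal M_i}(\x^m)^{d\langle a,\mathcal M_i\rangle}$; proving that $\Phi_\ell$ is actually a biholomorphism onto a suitable domain $V$ then requires an inductive Rouch\'e argument comparing $\psi_k$ with $g_k$ (the paper's Lemma~\ref{lem:V}), which is the most delicate technical step and which your sketch does not address.
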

	
	Moreover, in the case where $f$ satisfies $\Ree \left(\frac{a_i}{\aM} \right) < 0$ for some $i$, then we show that outside of the fixed set, no orbit stays in a neighborhood of the origin.

	\begin{mainthm}\label{thm:div}
		Let f be a biholomorphism of form \eqref{eq:f} such that there exists $i \in \{1, \ldots, n\}$ such that $\Ree \left(\frac{a_i}{\aM} \right) < 0$.
		Then there exists a neighbourhood $U$ of the origin such that for any $p \in U$ outside of the fixed point set, there exists $j \in \NN$ such that $f^{\circ j} (p) \notin U$ and $j' \in \NN$ such that $f^{\circ -j'} (p) \notin U$.
	\end{mainthm}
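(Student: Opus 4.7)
The plan is to track the scalar function $u := \x^M$, whose evolution along orbits is governed by a one-dimensional parabolic germ, and to derive from confinement of a non-trivial orbit in a small neighbourhood of $0$ a contradiction with the growth of the component $x_i$ with $\Ree(a_i/\aM) < 0$. A direct expansion yields
\[
u \circ f \;=\; u\prod_{j=1}^{n}\bigl(1 + u(a_j + A_j)\bigr)^{M_j} \;=\; u + \aM\, u^2 + u^2 E(x,u),
\]
with $E$ holomorphic and $E(0,0)=0$; up to this $x$-dependent remainder, $u$ satisfies a tangent to the identity germ in one variable with parabolic coefficient $\aM$.

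Fix $U = \{|x_j| < \varepsilon :\; 1\leq j\leq n\}$ with $\varepsilon > 0$ small to be chosen, and suppose by contradiction that some $p \in U \setminus \{\x^M = 0\}$ has its forward orbit $(p^{(k)})_{k\geq 0}$ entirely contained in $U$. Then $|u_k|$ is uniformly small and $|E(p^{(k)}, u_k)| \leq C\varepsilon$. Introducing the Fatou coordinate $\phi_k := -1/(\aM\, u_k)$, the expansion above gives
\[
\phi_{k+1} - \phi_k \;=\; 1 + \tfrac{1}{\aM}\,E(p^{(k)}, u_k) + O(|u_k|),
\]
so for $\varepsilon$ small one has $\Ree(\phi_{k+1} - \phi_k) \geq \tfrac{1}{2}$. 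Hence $\Ree \phi_k \to +\infty$, forcing $|u_k| \to 0$; a short bootstrap refines this to the sharp asymptotic $u_k = -\bigl(1 + O(\varepsilon)\bigr)/(\aM\, k) + o(1/k)$.

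Fix $i$ with $\Ree(a_i/\aM) < 0$ and set $\beta := -\Ree(a_i/\aM) > 0$. From $x_i^{(k+1)} = x_i^{(k)}\bigl(1 + u_k(a_i + A_i(p^{(k)}))\bigr)$, a continuous branch of $\log$ yields
\[
\log x_i^{(k)} \;=\; \log x_i + a_i \sum_{j=0}^{k-1} u_j + \sum_{j=0}^{k-1} u_j A_i(p^{(j)}) + O\!\Bigl(\sum_{j}|u_j|^2\Bigr).
\]
The $|u_j|^2$ sum converges; the main term equals $-(a_i/\aM)(1 + O(\varepsilon))\log k + O(1)$ by the preceding asymptotic; and since $|A_i| \leq C\varepsilon$ on $U$, the nuisance sum is bounded by $C'\varepsilon \log k$. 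Taking real parts,
\[
\log|x_i^{(k)}| \;=\; \log |x_i| + \bigl(\beta - O(\varepsilon)\bigr)\log k + O(1).
\]
For $\varepsilon$ small enough the coefficient of $\log k$ is positive, so $|x_i^{(k)}| \to +\infty$, contradicting $p^{(k)} \in U$.

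For the backward orbit, a direct computation shows that $f^{-1}$ is of the form~\eqref{eq:f} with each $a_j$ replaced by $-a_j$ (and $A_j$ correspondingly modified), so the ratios $a_j/\aM$ are preserved and applying the above argument to $f^{-1}$ finishes the proof. The main technical point is extracting the sharp decay $u_k \sim -1/(\aM k)$ from the induced one-dimensional system in the presence of the $x$-dependent perturbation $E$, since a decay slower than $1/k$ would destroy the $\log k$ divergence driving the contradiction.
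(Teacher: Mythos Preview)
Your proof is correct and follows essentially the same strategy as the paper: track the scalar $u=\x^M$, which evolves by a perturbed one-dimensional parabolic germ, deduce from confinement that $u_k\sim -1/(\aM k)$, and conclude that $|x_i^{(k)}|\to\infty$ for the index $i$ with $\Ree(a_i/\aM)<0$; the backward case follows by passing to $f^{-1}$. The only cosmetic differences are that the paper normalizes to $\aM=-1$ and invokes its stated Leau--Fatou theorem (together with the remark that it applies even when the higher-order remainder depends on all coordinates) to obtain $j\,u_j\to 1$ and $\arg u_j\to 0$, then argues multiplicatively via $|x_i^{(j+1)}|\geq |x_i^{(j)}|(1+\nu|u_j|)$, whereas you derive the asymptotic of $u_k$ by hand from the Fatou coordinate $-1/(\aM u)$ and argue additively via $\log x_i^{(k)}$.
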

	
	Our proof of these results follows the strategy of the proof of L\'opez-Hernanz and Rosas in dimension two, and we adapt their methods to higher dimensions. We first consider the setting of theorem \ref{thm:conv}. Using precise estimates in Leau-Fatou's flower theorem, we first find $d$ connected invariant open sets $U_1, \ldots, U_d$ with $d = \gcd (M)$ on which the orbits of $F$ converge to $0$ and such that any orbit converging to $0$ enters these sets, see section \ref{sect:Para}.
	
	We then show how to conjugate $f$ on $U_\ell$ to $(z, \w) \to (z+1, \w)$ ($\w \in \CC^{n-1}$). To achieve this, we first find $n-1$ invariant functions $\psi_i$ for $f$ that will play the role of $\w$, see Proposition \ref{prop:psi_I}. We then show that the map $\Phi_\ell : \x \mapsto (\frac{1}{\x^M}, \psi_2 (\x), \ldots, \psi_n (\x))$ conjugates $F$ to a map of the form $(z, \w) = (z + 1 + o(1), \w)$, see equation \eqref{eq:hh}. The main difficulty lies in proving that the map $\Phi_\ell$ is a biholomorphism. The proof relies on Rouché's theorem and precise estimates on $\psi_i$, which in higher dimensions are used in an inductive argument, see Lemma \ref{lem:V}.

	The final step of the proof consists in conjugating $(z, \w) \mapsto (z + 1 + o(1), \w)$ to $(z, \w) \mapsto (z + 1, \w)$. We first prove this for $\w$ fixed, see Lemma \ref{lem:beta}, then we paste together the different conjugations obtained, see Proposition \ref{prop:Main} 5.1. Lastly, we show how the results obtained in the previous sections can be extended to larger open sets so that, together with the same sets for $f^{-1}$, they cover a neighborhood of the origin, see \ref{sect:Proof}.
	
	Finally, the proof of theorem \ref{thm:div} is an adaptation in higher dimension of the proof of theorem 2 of \cite{LHR}, see Section \ref{sect:div}.
	
	The paper is organized as follows: in section \ref{sect:Pre} we introduce some tools and notations that are used later in the paper. In section \ref{sect:Para} we study the dynamics in some open sets. In section \ref{sect:Inv} we find invariant maps for $f$ in these sets, and we use them in sections \ref{sect:App} and \ref{sect:Fatou} to build Fatou coordinates. Section \ref{sect:Proof} is devoted to the end of the proof of theorem \ref{thm:conv}, and in section \ref{sect:div} we prove theorem \ref{thm:div}.
	
	\subsubsection*{Acknowledgements}
	The author is deeply grateful to André Belotto da Silva and Matteo Ruggiero for their numerous advices during the realization of this project. The author is partially supported by the project ``Plan d’investissements France 2030", IDEX UP ANR-18-IDEX-0001.

	\section{Preliminaries}\label{sect:Pre}
	
	Let $f : (\CC^n, 0) \to (\CC^n, 0)$ be of form \eqref{eq:f} 
	$$
	f (x_1, \ldots, x_n) = \Bigg(x_1 \Big(1+ \x^M \big(a_1 + A_1 (x_1, \ldots, x_n)\big)\Big), \ldots, x_n \Big(1+ \x^M \big(a_n + A_n (x_1, \ldots, x_n)\big)\Big)\Bigg)
	$$
	where $\x^M = x_1^{M_1} \ldots x_n^{M_n}$ with $M_i \in \NN$ for all $i$, $a_i \in \CC^*$ and $A_i : (\CC^n, 0) \to (\CC, 0)$ a holomorphic map for all $i$.
	
	Applying a linear change of coordinates of the form $(x_1, \ldots, x_n) \to (\alpha_1 x_1, \ldots, \alpha_n x_n)$ such that $\Alp^M = \frac{-1}{\aM}$, $f$ is still of form \eqref{eq:f} but $a_i$ is replaced by $\tilde{a}_i = \frac{-a_i}{\aM}$, which implies that $\scal{\tilde{a}, M} = -1$. From now on we can thus assume $\aM = -1$.
	
	Recall that we set $d = \gcd (M)$, $m_i = \frac{M_i}{d}$ and $\bar{m} = \# \{i, m_i = 0\}$. We can then permute the coordinates so that $m_i = 0$ if and only if $i \geq n-\bar{m} +1$.
	
	Let us remark that for any $\x \in \CC^n$, we have:
	\begin{equation}\label{eq:xm}
		f^m (\x) = \x^m (1 + \scal{a,m} \x^M + o(\x^M))
	\end{equation}
	where $f^m (\x) = \prod_{i=1}^{n} f_i(\x)^{m_i}$.
	
	If we write $y = \x^m$, this equation translates to $f(y) = y(1- \frac{1}{d} y^d + o(y^d))$ , which is a one-dimensional dynamic system. Let us therefore recall Leau-Fatou flower theorem (\cite{Leau}, \cite{Fatou1920}). Let $a$ be a non-zero complex number and $p \in \NN^*$.
	
	\begin{definition}
		For $\epsilon > 0$ and $\theta \in ]0, \pi/2[$ we define :	
		$$
		C (\epsilon, \theta) = \left\{z \in \CC : \abs{z} < \epsilon, \abs{\arg(z)} < \theta \right\}
		$$
		
		and
		\begin{equation*}
			\tilde{C} (\epsilon, \theta) = C (\epsilon, \theta) \cup \left\{z \in \CC, \abs{z- \frac{\epsilon}{2} e^{-i \theta}} < \frac{\epsilon}{2} \right\} \cup \left\{z \in \CC, \abs{z- \frac{\epsilon}{2} e^{i \theta}} < \frac{\epsilon}{2} \right\}.
		\end{equation*}
	\end{definition}

	\begin{figure}[hb]
		\def\svgwidth{0.6\columnwidth}
		\includegraphics[scale=0.3]{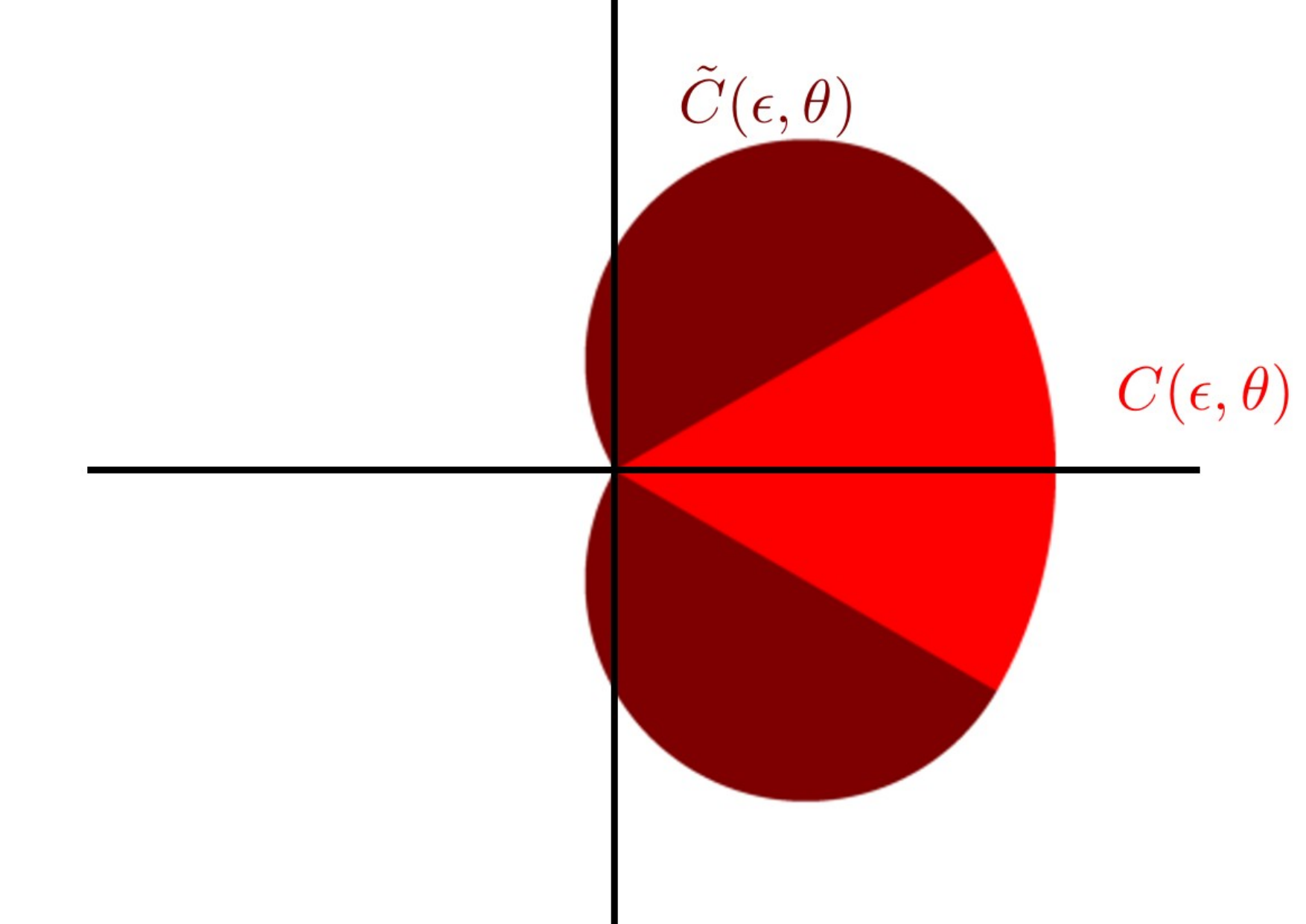}
		\caption{The sets $C$ and $\tilde{C}$}\label{fig:C}
	\end{figure}

	\begin{definition}[see figure \ref{fig:S}]\label{def:S}
		For $\epsilon > 0$ and $\theta \in ]0, \pi/2[$ we define :	
		$$
		S_a (\epsilon, \theta) = \left\{z \in \CC : -a z^p \in C (\epsilon, \theta) \right\}
		$$
		
		and
		\begin{equation*}
			\tilde{S}_a (\epsilon, \theta) = \left\{z \in \CC : -a z^p \in \tilde{C} (\epsilon, \theta) \right\}.
		\end{equation*}
	\end{definition}
	
	\begin{figure}
		\def\svgwidth{0.6\columnwidth}
		\includegraphics[scale=0.35]{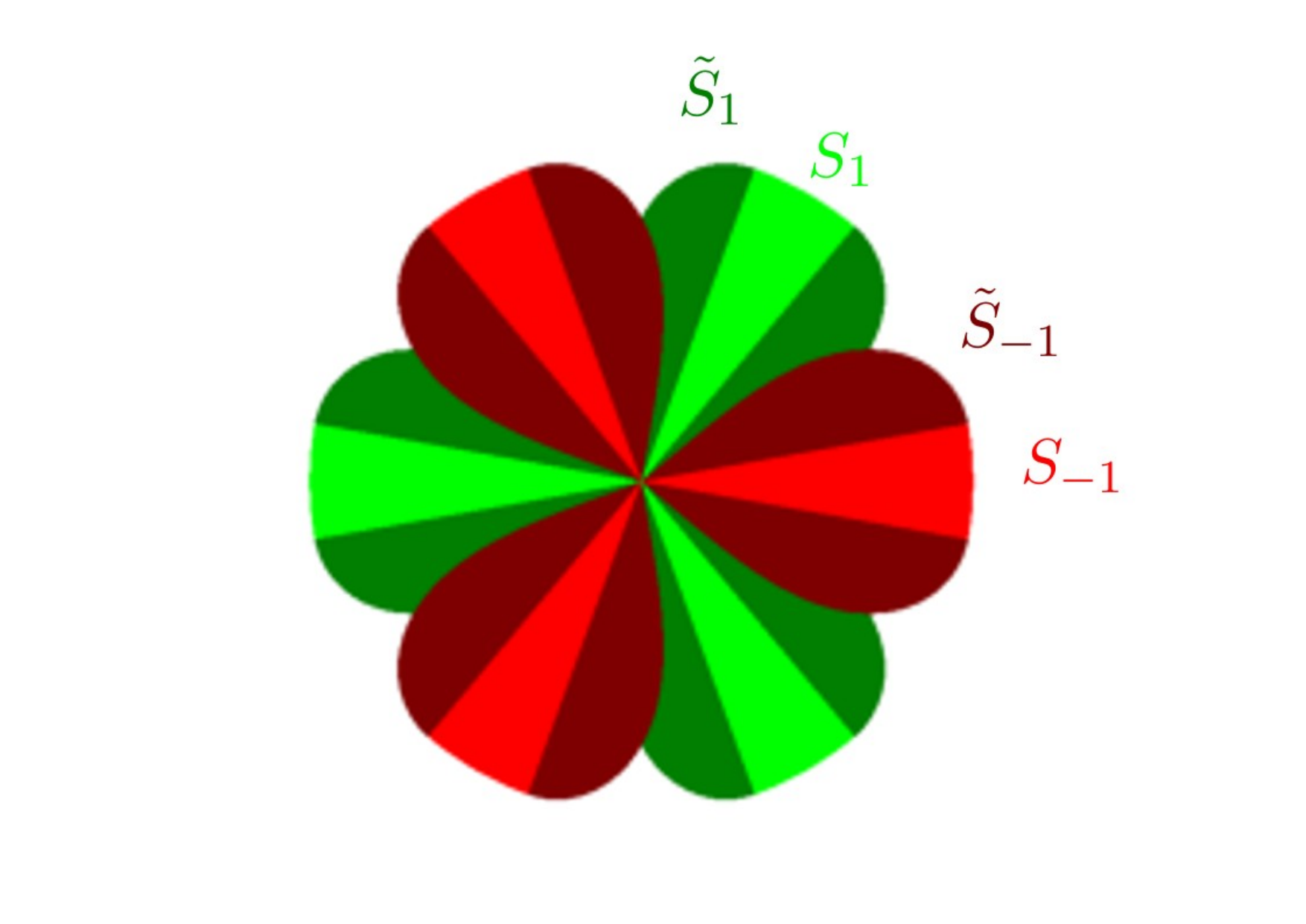}
		\caption{The sets $\tilde{S}_{-1}$ (in red) and $\tilde{S}_{1}$ (in green) for p=3.}\label{fig:S}
	\end{figure}

	\begin{thm}[Leau-Fatou flower theorem]\label{thm:LF}
		Let $f : (\CC, 0) \to (\CC, 0)$ be a biholomorphism of the form: $f (z) = z+ \frac{a}{p} z^{p+1} + o (z^{p+1})$.
		
		Then for all $\theta \in ]0, \pi/2[$, there exists $\epsilon_0 > 0$, $c > 1$ and $C > 0$ such that for every $\epsilon \leq \epsilon_0$ and every component $\tilde{S}$ of $\tilde{S}_a (\epsilon, \theta)$, we have $f (\tilde{S}) \subset \tilde{S}$ and
		$$
		\lim_{j \to \infty} j (f^{\circ j}(z))^p = - a \text{ and } \abs{f^{\circ j} (z)}^p \leq c \frac{\abs{z}^p}{1 + \abs{a} j \abs{z}^p}
		$$
		for all $z \in \tilde{S}$ and $j \in \NN$.
		
		Moreover, if $S$ is the component of $S_a (\epsilon, \theta)$ contained in $\tilde{S}$, then $f (S) \subset S$ and $f^{\circ j} (z) \subset S$ for every $z \in \tilde{S}$ and $j \geq C/\abs{z}^p$.
	\end{thm}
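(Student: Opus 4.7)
The plan is to reduce to the classical Fatou-coordinate picture via the $p$-to-one branched change of variable $w = -1/(a\,z^p)$. Each connected component of $\tilde{S}_a(\epsilon,\theta)$ is sent biholomorphically onto the region $\tilde\Omega := 1/\tilde{C}(\epsilon,\theta)$, and each component of $S_a(\epsilon,\theta)$ onto the sector $\Omega_S := \{\abs{w}>1/\epsilon,\ \abs{\arg w}<\theta\}$. Expanding $f(z)^p = z^p(1 + a z^p + o(z^p))$ from the hypothesis shows that in these coordinates the conjugated map reads
\[
g(w) \;=\; w + 1 + \eta(w), \qquad \eta(w)\to 0 \text{ as } \abs{w}\to\infty.
\]
Moreover $\tilde\Omega$ decomposes as $\Omega_S$ together with two tilted half-planes $\{\Ree(e^{\pm i\theta}w)>1/\epsilon\}$, which are the images under $w=1/u$ of the two disks in the definition of $\tilde{C}$ (whose boundaries pass through $u=0$).

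Next I would verify the invariances $g(\tilde\Omega)\subset\tilde\Omega$ and $g(\Omega_S)\subset\Omega_S$. Each of the three pieces of $\tilde\Omega$ is defined by a linear inequality strictly preserved by the translation $w\mapsto w+1$: for the two half-planes one has $\Ree(e^{\pm i\theta}(w+1)) = \Ree(e^{\pm i\theta}w)+\cos\theta$, and for the narrow sector the condition $\abs{\Imm w}\leq\tan\theta\cdot\Ree w$ is strictly improved by translation by a margin $\tan\theta$. Once $\epsilon$ is chosen small enough so that $\abs{\eta(w)}$ is smaller than each of these margins on $\{\abs{w}>1/\epsilon\}$, the perturbation $\eta$ cannot destroy the inequalities, so each piece is individually $g$-invariant; in particular, so are $\tilde\Omega$ and $\Omega_S$.

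For the quantitative part, iterate $g$ to obtain $w_j = w_0 + j + \sum_{k<j}\eta(w_k)$ with $\abs{\eta(w_k)}<\delta$ arbitrarily small. A direct piece-by-piece analysis of the three parts of $\tilde\Omega$ yields a uniform lower bound $\abs{w_j}\geq (\abs{w_0}+j)/c$ for some constant $c>1$ depending only on $\theta$. Translating back via $\abs{z}^p = 1/(\abs{a}\abs{w})$ produces precisely the announced inequality $\abs{f^{\circ j}(z)}^p \leq c\abs{z}^p/(1+\abs{a}j\abs{z}^p)$. The limit $j(f^{\circ j}(z))^p\to -a$ then reduces, via the change of variable, to the elementary statement $w_j/j\to 1$, which is a Cesàro consequence of $\eta(w_k)\to 0$.

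For the last sentence of the statement, any $w_0\in\tilde\Omega$ satisfies $\abs{w_0}\leq 1/(\abs{a}\abs{z_0}^p)$, and $w_j\approx w_0+j$ has real part going to $+\infty$, so $\abs{\arg w_j}<\theta$ holds once $j$ exceeds a uniform multiple of $\abs{w_0}$, i.e. once $j\geq C/\abs{z_0}^p$; this is precisely the condition that $f^{\circ j}(z_0)$ has entered $S$. The main obstacle I anticipate is the uniform quantitative estimate $\abs{w_j}\geq(\abs{w_0}+j)/c$ on the two half-plane pieces of $\tilde\Omega$, where $\abs{w_0}$ may be much larger than $\Ree(w_0)$: one must carefully balance the initial modulus against the drift $+j$ and check that even the worst-case direction (near the boundary of the half-plane piece) still yields a positive uniform constant, of order $\cos\theta/(1+\sin\theta)$.
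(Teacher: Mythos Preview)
The paper does not give its own proof of this statement: Theorem~\ref{thm:LF} is quoted as the classical Leau--Fatou flower theorem with references to \cite{Leau} and \cite{Fatou1920}, followed only by a remark that the argument does not require holomorphy (so that it applies to the non-holomorphic map $\x\mapsto f(\x)^m$ arising from equation~\eqref{eq:xm}). There is therefore nothing in the paper to compare your argument against.

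That said, your sketch is precisely the standard proof and is sound. The substitution $w=-1/(az^p)$ sends each component of $\tilde S_a(\epsilon,\theta)$ biholomorphically onto $1/\tilde C(\epsilon,\theta)$, which is indeed the union of the sector $\{\abs{w}>1/\epsilon,\ \abs{\arg w}<\theta\}$ with the two half-planes $\{\Ree(e^{\pm i\theta}w)>1/\epsilon\}$, and the conjugated dynamics is $w\mapsto w+1+\eta(w)$ with $\eta(w)=O(1/w)$. Your invariance argument, the Ces\`aro step for $w_j/j\to 1$, and the entry-time bound $j\geq C/\abs{z}^p$ are all correct. The ``obstacle'' you flag---the uniform lower bound $\abs{w_j}\geq(\abs{w_0}+j)/c$ on the half-plane pieces---is handled exactly as you indicate: on $\{\Ree(e^{\pm i\theta}w)>1/\epsilon\}$ one has $\abs{w}\leq \Ree(e^{\pm i\theta}w)/\cos\theta'$ for a suitable $\theta'$ only when $w$ lies in a cone, but in any case $\Ree(e^{\pm i\theta}w_j)\geq \Ree(e^{\pm i\theta}w_0)+j\cos\theta-j\delta$ combined with $\abs{w_0}\leq \Ree(e^{\pm i\theta}w_0)+\abs{\Imm(e^{\pm i\theta}w_0)}$ and the invariance of the imaginary part up to $O(j\delta)$ gives the constant. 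One minor remark: the limit you obtain, once unwound, is $j(f^{\circ j}(z))^p\to -1/a$ rather than $-a$; in the paper's applications $a=-1$ throughout (after the normalization $\langle a,M\rangle=-1$), so the two agree and the discrepancy is immaterial.
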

	
	\begin{rem}
		In our setting, $f^m (\x) = \x^m (1 + \scal{a,m} \x^M + o(\x^M))$ but the higher order terms cannot be expressed only with $\x^m$. However, this result still holds as Leau-Fatou's proof does not use the fact that $f$ is holomorphic.
	\end{rem}

	\section{Existence of parabolic domains}\label{sect:Para}
	
	Consider $f : (\CC^n, 0) \to (\CC^n, 0)$ of the form \eqref{eq:f} with $\aM = -1$. The condition in theorem \ref{thm:conv} translates into $\Ree (a_i) < 0$, therefore there exists $\gamma > 0$ such that $\Ree (a_i) + \frac{\gamma}{d} < 0$ for all $i$.
	
	\begin{definition}\label{def:set}
		For $\theta \in ]0, \pi/2[$, $\epsilon$ and $\delta$ positive, we define:
		$$
		D (\epsilon, \theta, \delta) = \{ \x \in \CC^n : \x^M \in C (\epsilon, \theta), \abs{x_i} < \delta \: \forall i\}
		$$
		and 
		$$
		U (\epsilon, \theta) = \{ \x \in \CC^n : \x^M \in C (\epsilon, \theta), \abs{x_i} < \abs{\x^m}^{\gamma} \: \forall i\}.
		$$
		
		We can remark that $U$ is composed of $d$ connected components $U_\ell$, $\ell \in \{0, \ldots, d-1\}$, defined by:
		$$
		U_\ell = \{ \x \in \CC^n : \abs{(\x^m)^d} < \epsilon, \abs{\arg (\x^m) + \frac{2 \pi \ell}{d}} < \frac{\theta}{d}, \abs{x_i} < \abs{\x^m}^{\gamma} \: \forall i\}.
		$$
	\end{definition}

	\begin{figure}[hb]
		\centering
		\begin{subfigure}{0.49\columnwidth}
			\def\svgwidth{1.2\columnwidth}
			\includegraphics[scale=0.3]{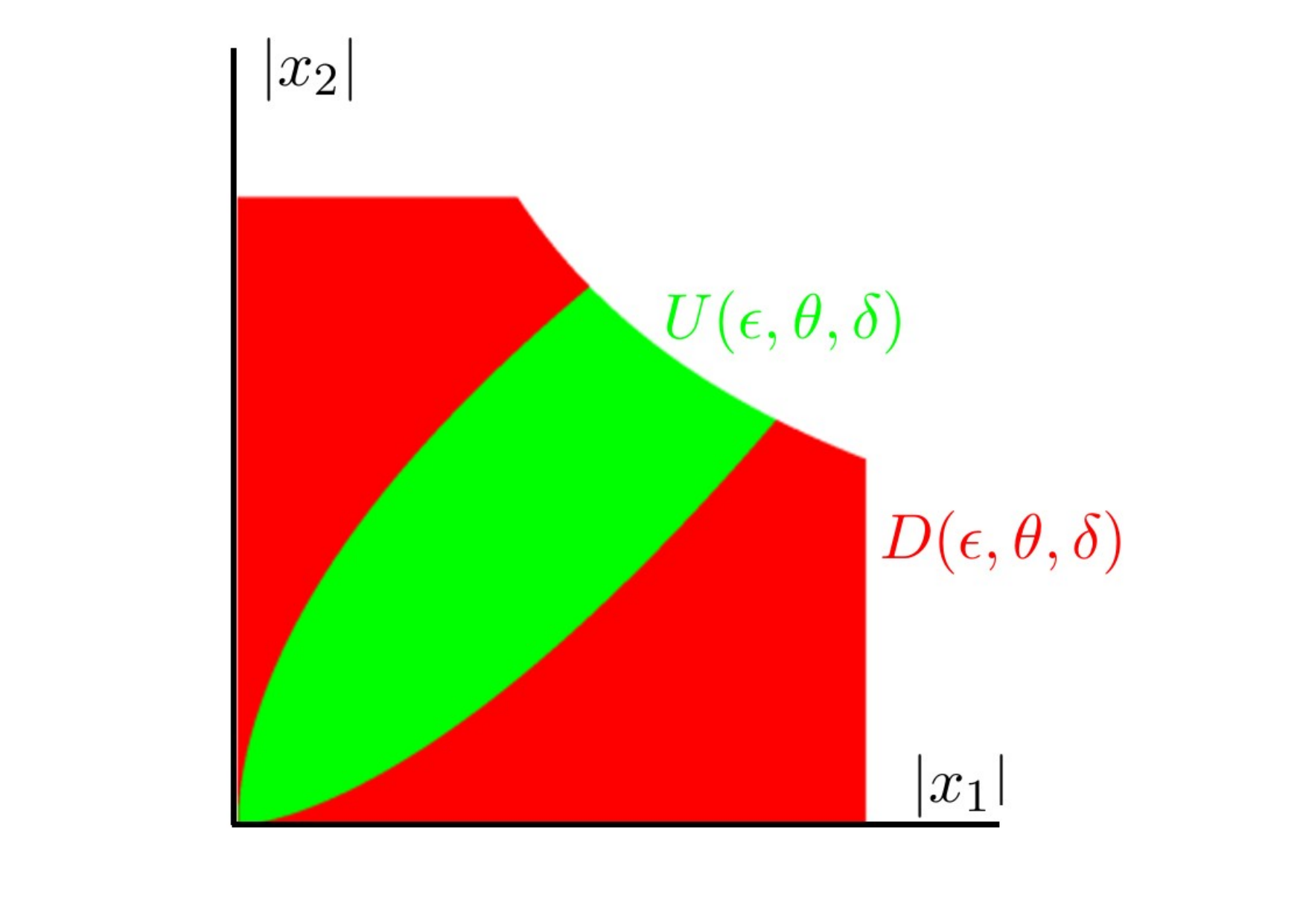}
			\caption{The moduli of the coordinates in $D$ and $U$ \\ in the case $M = (2, 3)$}\label{fig:DU}
		\end{subfigure}
		\begin{subfigure}{0.49\columnwidth}
			\def\svgwidth{1\columnwidth}
			\includegraphics[scale=0.3]{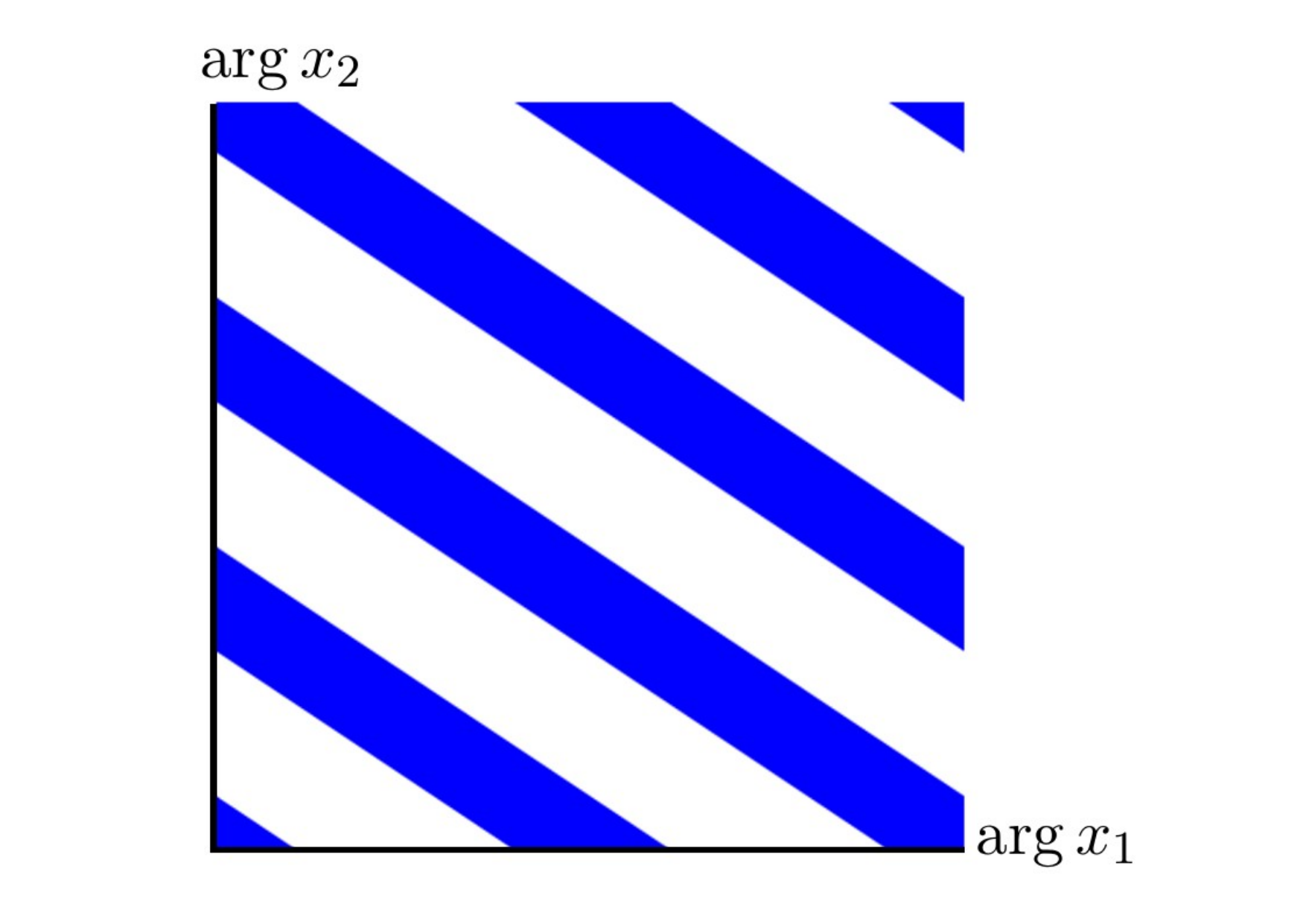}
			\caption{The arguments of the coordinates in $D$ and $U$ in the case $M = (2, 3)$}\label{fig:arg}
		\end{subfigure}
	\end{figure}

	\begin{rem}
		\begin{enumerate}
			\item For all $i \in \{1, \ldots, n - \bar{m}\}$, the set of equations $\abs{x_i} < \abs{\x^m}^{\gamma}$ implies that $x_i \neq 0$, while if $m_i = 0$, the set $U_\ell$ contains elements $\x$ such that $x_i = 0$.
			\item We built these sets in order to exploit equation \eqref{eq:xm}, which is why we have
			$$
			D (\epsilon, \theta, \delta) = \{ \x \in \CC^n : \x^m \in S_{-1} (\epsilon, \theta), \abs{x_i} < \delta \: \forall i\}.
			$$
			In the same way, we will define:
			$$
			\tilde{D} (\epsilon, \theta, \delta) = \{ \x \in \CC^n : \x^m \in \tilde{S}_{-1} (\epsilon, \theta), \abs{x_i} < \delta \: \forall i\}.
			$$
		\end{enumerate}
	\end{rem}

	\begin{prop}\label{prop:cvgce}
		For $\epsilon, \theta > 0$ small enough, we have $f (U (\epsilon, \theta)) \subset U (\epsilon, \theta)$ and for all $\x \in U (\epsilon, \theta)$, $f^{\circ j} (\x) \underset{j\to+\infty}{\longrightarrow} 0$.
	\end{prop}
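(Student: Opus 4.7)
The plan is to verify separately that the two conditions defining $U(\epsilon, \theta)$ are preserved by $f$, and then derive convergence to the origin from Leau-Fatou's estimate.

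\emph{Sectorial condition.} Expanding $f(\x)^M = \prod_i x_i^{M_i}(1 + \x^M(a_i + A_i(\x)))^{M_i}$ and using $\aM = -1$ one finds
\[
f(\x)^M = \x^M\bigl(1 - \x^M + o(\x^M)\bigr),
\]
so $w := \x^M$ undergoes a one-dimensional parabolic iteration of the shape $w \mapsto w - w^2 + o(w^2)$. Theorem~\ref{thm:LF} (with $p=1$, $a=-1$; the remark following it authorises dependence of the remainder on the extra coordinates) supplies the invariance of $C(\epsilon, \theta) = S_{-1}(\epsilon, \theta)$ and the estimate $|f^{\circ j}(\x)^M| \leq c\,|\x^M|/(1 + j|\x^M|)$.

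\emph{Comparison condition.} For each $i$ I would study the ratio $R_i(\x) := |x_i|^2 / |\x^m|^{2\gamma}$. Using $\log|1 + w|^2 = 2\Ree(w) + O(|w|^2)$ and $\scal{a,m} = -1/d$, a direct expansion yields
\[
\log \frac{R_i(f(\x))}{R_i(\x)} = 2\Ree\bigl(\x^M(a_i + \gamma/d)\bigr) + 2\Ree\bigl(\x^M A_i(\x)\bigr) + o(|\x^M|).
\]
By the choice of $\gamma$ at the start of Section~\ref{sect:Para}, $\Ree(a_i) + \gamma/d < 0$ uniformly in $i$; shrinking $\theta$ so that $(a_i + \gamma/d)\,\x^M$ stays in an open half-plane strictly to the left of the imaginary axis, the leading term is bounded above by $-c|\x^M|$ for some $c > 0$. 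Inside $U$ the condition $|x_i| < |\x^m|^\gamma$ forces $|\x| = O(|\x^M|^{\gamma/d})$, hence $|A_i(\x)| = O(|\x^M|^{\gamma/d})$ and $\Ree(\x^M A_i(\x)) = o(|\x^M|)$. After shrinking $\epsilon$ accordingly one obtains $\log(R_i(f(\x))/R_i(\x)) \leq -\tfrac{c}{2}|\x^M| < 0$, so $R_i < 1$ is preserved, i.e.\ $|f_i(\x)| < |f(\x)^m|^\gamma$. Combined with the first step this gives $f(U(\epsilon,\theta)) \subset U(\epsilon,\theta)$.

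\emph{Convergence and main difficulty.} The Leau-Fatou estimate forces $f^{\circ j}(\x)^M \to 0$, hence $|f^{\circ j}(\x)^m| \to 0$; the preserved inequalities $|f^{\circ j}_i(\x)| < |f^{\circ j}(\x)^m|^\gamma$ then drive every coordinate to $0$. The main technical obstacle is the comparison step: one has to verify that the remainder involving $A_i$ and the quadratic corrections of the logarithmic expansion are absorbed by the linear term $2\Ree(\x^M(a_i + \gamma/d))$. The key point is that the defining inequality of $U$ itself provides the bound $|\x| = O(|\x^M|^{\gamma/d})$, which makes the $A_i$-contribution of higher order in $|\x^M|$.
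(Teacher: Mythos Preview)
Your proposal is correct and follows essentially the same approach as the paper's proof: both verify the sectorial condition via Leau--Fatou applied to $\x^M$, then show that the ratio $|x_i|/|\x^m|^\gamma$ decreases along orbits because $\Ree(a_i+\gamma/d)<0$, with the $A_i$-terms absorbed using the defining inequality of $U$. The only difference is cosmetic---you take logarithms of $R_i=|x_i|^2/|\x^m|^{2\gamma}$, whereas the paper computes the multiplicative update of $|x_i|/|\x^m|^\gamma$ directly---and the convergence step is identical.
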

	
	\begin{proof}
		
		Using equation \eqref{eq:xm} and arguing as in Leau-Fatou theorem, we can find $\epsilon, \theta > 0$ such that for all $\x \in U (\epsilon, \theta, \gamma)$, we have:
		$$
		\abs{(f(\x)^m)^d} < \epsilon \text{ and } \abs{\arg \left((f(\x)^m)^d\right)} < \theta.
		$$
		
		Let $\x \in U (\epsilon, \theta, \gamma)$ and $i \in \{1, \ldots, n\}$, then:
		
		\begin{align*}
			\frac{\abs{f_i (\x)}}{\abs{f(\x)^m}^{\gamma}} &= \frac{\abs{\x_i (1+ \x^M (a_i + o (1)))}}{\abs{\x^m (1 + \scal{a,m} (\x^m)^d + o ((\x^m)^d))}^\gamma}\\
			&= \frac{\abs{\x_i}}{\abs{\x^m}^\gamma} \frac{\abs{1 + a_i \x^M + o(\x^M)}}{\abs{1 - \frac{\gamma}{d} \x^M + o(\x^M)}}\\
			&= \frac{\abs{\x_i}}{\abs{\x^m}^\gamma} \abs{1+ (a_i + \frac{\gamma}{d}) \x^M + o (\x^M)}.
		\end{align*}
		
		Thus if we choose $\epsilon$ and $\theta$ small enough, we get : for all $\x \in U (\epsilon, \theta, \gamma)$,
		\begin{equation}\label{eq:eta}
			\frac{\abs{f_i (\x)}}{\abs{f(\x)^m}^{\gamma}} \leq \frac{\abs{\x_i}}{\abs{\x^m}^\gamma} (1 - \eta \abs{\x^M}) \leq \frac{\abs{\x_i}}{\abs{\x^m}^\gamma}
		\end{equation}
		for some $\eta > 0$. It implies that $F (U) \subseteq U$. Moreover, as $f^{\circ j} (\x) ^m$ converges to $0$ for all $\x$ in $U$, we have that $f^{\circ j}_i (\x)$ also converges to $0$ on $U$.
	\end{proof}
	
	\begin{prop}\label{prop:inU}
		Let $\x \in \CC^n$ such that $f^{\circ j} (\x) \underset{j\to+\infty}{\longrightarrow} 0$. Then there exists $j_0$ such that for all $j \geq j_0$, $f^{\circ j} (\x) \in U$.
	\end{prop}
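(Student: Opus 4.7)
The plan is to reduce the problem to establishing, for $j$ large, two separate properties: (a) $f^{\circ j}(\x)^M \in C(\epsilon, \theta)$, and (b) $|f^{\circ j}_i(\x)| < |f^{\circ j}(\x)^m|^\gamma$ for every $i$; their conjunction is precisely the definition of $f^{\circ j}(\x) \in U(\epsilon, \theta)$. I may assume from the outset that $\x^M \neq 0$: any $\x$ with $\x^M = 0$ is fixed by $f$ (every factor $1 + \x^M(a_i + A_i)$ equals $1$), and a non-zero fixed point cannot have an orbit converging to $0$. The same multiplicative form of $f$ then guarantees $f^{\circ j}(\x)^M \neq 0$ for all $j$.

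For (a), I would apply the one-dimensional Leau-Fatou Theorem \ref{thm:LF} to the sequence $y_j := f^{\circ j}(\x)^m$. Equation \eqref{eq:xm} together with $\scal{a,m} = \aM/d = -1/d$ yields $y_{j+1} = y_j - \tfrac{1}{d} y_j^{d+1} + o(y_j^{d+1})$, which is exactly of the form required by Theorem \ref{thm:LF} with $p = d$ and $a = -1$. Since $y_j \to 0$, the theorem forces $y_j$ to enter some component of $\tilde{S}_{-1}(\epsilon, \theta)$ after finitely many steps, and then the ``moreover'' clause places it inside the corresponding $S_{-1}(\epsilon, \theta)$; by Definition \ref{def:S} this is precisely $f^{\circ j}(\x)^M = y_j^d \in C(\epsilon, \theta)$. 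As a byproduct I also extract the asymptotic $j|y_j|^d \to 1$, hence $|f^{\circ k}(\x)^M| \asymp 1/k$ as $k \to \infty$.

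For (b), the crucial observation is that the algebraic computation displayed in the proof of Proposition \ref{prop:cvgce} never used the condition $|x_i| < |\x^m|^\gamma$ that defines $U$: both the identity $\frac{|f_i(\x)|}{|f(\x)^m|^\gamma} = \frac{|x_i|}{|\x^m|^\gamma}\,\bigl|1 + (a_i + \gamma/d)\x^M + o(\x^M)\bigr|$ and the contraction estimate \eqref{eq:eta} are valid for every small $\x$ with $\x^M \in C(\epsilon, \theta)$. Choosing $j_1$ via (a) large enough so that for all $j \geq j_1$ the iterate $f^{\circ j}(\x)$ lies in a small polydisc and $f^{\circ j}(\x)^M \in C(\epsilon, \theta)$, iterating \eqref{eq:eta} from $j_1$ to $j$ gives
$$\frac{|f^{\circ j}_i(\x)|}{|f^{\circ j}(\x)^m|^\gamma} \leq \frac{|f^{\circ j_1}_i(\x)|}{|f^{\circ j_1}(\x)^m|^\gamma}\prod_{k=j_1}^{j-1}\bigl(1 - \eta\,|f^{\circ k}(\x)^M|\bigr).$$
The asymptotic $|f^{\circ k}(\x)^M| \asymp 1/k$ from (a) makes $\sum_k |f^{\circ k}(\x)^M|$ diverge, so the infinite product vanishes and the ratio falls below $1$ for $j$ large enough, which is (b). The main subtlety I anticipate is in this last step: the initial ratio at time $j_1$ may easily exceed $1$ (the hypothesis only says $f^{\circ j}(\x) \to 0$, not that it approaches $0$ along $U$), so one must simultaneously exploit the survival of \eqref{eq:eta} outside $U$ and the sharp Leau-Fatou rate $|f^{\circ k}(\x)^M| \gtrsim 1/k$ in order to absorb that initial overshoot through the decay of the product.
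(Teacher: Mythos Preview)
Your proposal is correct and follows essentially the same route as the paper's proof: first invoke the one-dimensional Leau--Fatou theorem on the monomial $f^{\circ j}(\x)^m$ to obtain $f^{\circ j}(\x)^M \in C(\epsilon,\theta)$ for $j\ge j_1$, then observe that the contraction estimate \eqref{eq:eta} only requires $\x^M \in C(\epsilon,\theta)$ (and $\x$ small), iterate it, and use the Leau--Fatou asymptotic $j\,|f^{\circ j}(\x)^M|\to 1$ to make the product $\prod_{l\ge j_1}(1-\eta|f^{\circ l}(\x)^M|)$ tend to zero. You are in fact slightly more explicit than the paper about why \eqref{eq:eta} survives outside $U$ and about needing the iterates in a small polydisc, but the argument is the same.
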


	\begin{proof}
		By Leau-Fatou theorem, as $f^{\circ j} (\x) ^m$ converges to $0$, we know that there exists $j_1$ such that for all $j \geq j_1$, we have $\abs{(f^{\circ j} (\x)^m)^d} < \epsilon$ and $\abs{\arg ((f^{\circ j} (\x)^m)^d)} < \theta$.
		
		These two conditions are enough to obtain the estimate \eqref{eq:eta}, so we can write:
		\begin{equation}\label{eq:jz}
			\frac{\abs{f^{\circ j}_i (\x)}}{\abs{f^{\circ j}(\x)^m}^{\gamma}} \leq \frac{\abs{f^{\circ j_1}_i (\x)}}{\abs{f^{\circ j_1}(\x)^m}^{\gamma}} \prod_{l = j_1}^{j-1} \left(1 - \eta \abs{f^{\circ l}(\x)^M} \right).
		\end{equation}
		
		Moreover, we know from Leau-Fatou theorem that $\lim_{j \to \infty} j (f^{\circ j}(z))^M = 1$, which implies that $\sum_{l \geq j_1} \abs{(f^{\circ l} (\x)^m)^d)} = + \infty$.
		Therefore the right hand term of \eqref{eq:jz} tends to $0$, so at some point it is smaller than 1 and we have $\x \in U$.
	\end{proof}
	
	\begin{rem}\label{rem:D}
		For $\epsilon, \theta$ and $\delta$ small enough, the set $D = D (\epsilon, \theta, \delta)$ also satisfies $f (D) \subset D$ and for any $\x \in D$, $f^{\circ j} (\x) \to 0$.
		
		As $\Ree (a_i) < 0$ for all $i$, there exists $\rho > 0$ such that for any $\x \in D$ and any $i \in \{1, \ldots, n\}$,
		$$
		\abs{f_i (\x)} = \abs{x_i (1 + \x^m (a_i + o(1)))} < \abs{x_i} (1 - \rho \abs{\x^m}) < \abs{x_i} < \delta,
		$$
		so $D$ is invariant under $f$.
		
		To show the convergence, we use the equality above to obtain:
		$$
		\abs{f_i^{\circ j} (\x)} < \abs{x_i} \prod_{l=0}^{j-1} \left(1 - \rho \abs{f^{\circ l} (\x)^m}\right)
		$$
		and as in the proof of proposition \ref{prop:inU}, this product converges to $0$.
	\end{rem}

	\section{Invariant functions}\label{sect:Inv}
	
	In this section we build holomorphic maps $\psi_I : U_\ell \to \CC$ for $\ell \in \{0, \ldots, d-1\}$  which are invariant under the action of $f$.
	
	We remark that for $\x \in U_\ell$, the argument of $\x^m$ is close to $\frac{2 \pi \ell}{d}$. We can thus define a holomorphic map $U_\ell \to \CC$, $\x \mapsto \log (\x^m)$, which can be used to define $(\x^m)^\lambda = \exp (\lambda \log(\x^m))$ for any $a \in \CC$.
	
	Up to a linear change of coordinates, we can assume $\ell = 0$.
	
	Notice that $f$ is close to the time-1 flow of the vector field
	$$
	X = \x^M (a_1 x_1 \partial_{x_1} + \ldots + a_n x_n \partial_{x_n}).
	$$
	
	\begin{definition}\label{def:g_I}
		Let $I \in \NN^n$ be a n-tuple of integers. We define $g_I$ on $U_{0}$ as :
		$$
		g_I (\x) = \x^I (\x^m)^{\lambda_I}
		$$
		where $\lambda_I = d \scal{a,I}$.
	\end{definition}
	
	Then $g_I$ is invariant under $X$, and thus is almost stable under $f$. Therefore we search invariant functions that are close to these first integrals of $X$.
	
	\begin{prop}\label{prop:psi_I}
		The sequence $\left(g_I (f^{\circ j} (\x))\right)_j$ converges uniformly on $U_0$ to a holomorphic function $\psi_I$ invariant by $f$. Moreover there exists $u_I$ holomorphic such that $\psi_I = g_I u_I$ and $u_I - 1 = O (\abs{\x^m}^{\gamma})$.
	\end{prop}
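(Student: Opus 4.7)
The strategy is to realize $\psi_I$ as the uniform limit, on compact subsets of $U_0$, of the sequence $g_I \circ f^{\circ j}$, which I express as a telescoping product $g_I(f^{\circ j}(\x)) = g_I(\x) \prod_{k=0}^{j-1} R(f^{\circ k}(\x))$ with $R(\x) := g_I(f(\x))/g_I(\x)$. The branch of $\log(\x^m)$ chosen on $U_0$ is consistent under iteration because $f(U_0) \subset U_0$ by Proposition \ref{prop:cvgce}, so the $(\x^m)^{\lambda_I}$ factor is unambiguously defined along orbits.

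First I compute $R$. Using the form of $f$ together with the identity $\lambda_I \scal{a,m} = d\scal{a,I}\cdot(-1/d) = -\scal{a,I}$ (which is precisely why $\lambda_I$ was set to $d\scal{a,I}$), expansion yields
\[
f(\x)^I = \x^I\bigl(1 + \scal{a,I}\x^M + \x^M\,O(\norm{\x}) + O(\x^{2M})\bigr),
\]
\[
(f(\x)^m)^{\lambda_I} = (\x^m)^{\lambda_I}\bigl(1 - \scal{a,I}\x^M + \x^M\,O(\norm{\x}) + O(\x^{2M})\bigr),
\]
where the principal branch is used for $(f(\x)^m/\x^m)^{\lambda_I}$ since this ratio is close to $1$ on $U_0$. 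Multiplying, the leading $\scal{a,I}\x^M$ terms cancel and $R(\x) = 1 + \x^M\,O(\norm{\x}) + O(\x^{2M})$. Choosing $\gamma \leq d$, which we may since $\gamma$ only needs to satisfy $\Ree(a_i) + \gamma/d < 0$, and using the defining inequalities $|x_i| \leq |\x^m|^\gamma$ of $U_0$, we get $\norm{\x} \leq C|\x^m|^\gamma$ and $|\x^{2M}| = |\x^m|^{2d} \leq |\x^m|^{d+\gamma} = |\x^M||\x^m|^\gamma$, hence the key estimate
\[
|R(\x) - 1| \leq C\,|\x^M|\,|\x^m|^\gamma \qquad \text{on } U_0.
\]

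Next I apply this at $\x_k := f^{\circ k}(\x)$. Theorem \ref{thm:LF} gives $|\x_k^M| \leq c|\x^M|/(1+k|\x^M|)$, hence $|\x_k^m| \leq c^{1/d}|\x^m|/(1+k|\x^M|)^{1/d}$. Summing over $k$ and comparing to the convergent integral $\int_0^{\infty} (1+s)^{-1-\gamma/d}\,ds$, I obtain
\[
\sum_{k=0}^{\infty} |R(\x_k) - 1| \;\leq\; C\sum_{k\geq 0} \frac{|\x^M|\,|\x^m|^\gamma}{(1+k|\x^M|)^{1+\gamma/d}} \;\leq\; C'\,|\x^m|^\gamma,
\]
uniformly on compact subsets of $U_0$. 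The infinite product $u_I(\x) := \prod_{k=0}^{\infty} R(\x_k)$ therefore converges uniformly on compacta to a nowhere-vanishing holomorphic function satisfying $|u_I - 1| \leq C''|\x^m|^\gamma$. Setting $\psi_I := g_I\,u_I$, we have $\psi_I(\x) = \lim_j g_I(f^{\circ j}(\x))$, so $\psi_I$ is holomorphic on $U_0$ and the invariance $\psi_I \circ f = \psi_I$ follows by reindexing the product.

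The main obstacle is the key estimate $|R - 1| \leq C|\x^M||\x^m|^\gamma$: the cancellation of the leading $\scal{a,I}\x^M$ terms between $f(\x)^I$ and $(f(\x)^m)^{\lambda_I}$ is essential, since without it one is left with a bare $O(\x^M)$ factor and the Leau-Fatou decay rate $|\x_k^M| \sim 1/k$ makes $\sum_k |R(\x_k) - 1|$ diverge logarithmically. The extra factor $|\x^m|^\gamma$ inherited from the $U_0$ constraints supplies exactly the additional decay needed to close the argument.
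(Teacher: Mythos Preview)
Your proof is correct and follows the same approach as the paper: telescope the product via the ratio $R = g_I\circ f/g_I$, use the definition of $\lambda_I$ to cancel the leading $\scal{a,I}\x^M$ term, and then invoke the Leau--Fatou decay $|f^{\circ k}(\x)^M| \leq c|\x^M|/(1+k|\x^M|)$ to make the infinite product converge. Your passage to $|u_I-1| = O(|\x^m|^\gamma)$ via the uniform bound $\sum_k |R(\x_k)-1| \leq C'|\x^m|^\gamma$ is in fact slightly cleaner than the paper's separate estimates on $|u_I|$ from above, from below, and on $|\arg u_I|$; note that your bounds are actually uniform on all of $U_0$ (as the proposition asserts, not merely on compacta), and the extra hypothesis $\gamma \leq d$ is unnecessary since $|\x^{2M}| = |\x^M|\cdot|\x^M| \leq C|\x^M|\,\|\x\|$ already.
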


	\begin{proof}
		We have indeed :
		\begin{align*}
			\frac{g_I (f(\x))}{g_I (\x)} &= \frac{\x^I (\x^m)^{\lambda_I} (1 + \scal{a,I} \x^M + o (\x^M)) \left(1 + \lambda_I \scal{a,m} \x^M + o (\x^M)\right) }{\x^I (\x^m)^{\lambda_I}}\\
			&= 1 + l (\x)
		\end{align*}
		where $l(\x) = \x^M O (\x)$ as $\scal{a,I} + \lambda_I \scal{a,m} = 0$ (recall that $\scal{a,m} = - \tfrac{1}{d}$). Moreover we know that for $\x \in U_0$, we have $f^{\circ j} (\x) \in U_0$ and thus for all $i, j$: $\abs{f^{\circ j}_i (\x)} \leq \abs{f^{\circ j} (\x)^m}^\gamma$. Therefore there exists $K>0$ such that $\abs{l (\x)} \leq K \abs{\x^m}^{d+ \gamma}$ and for all $j$, $\abs{l (f^{\circ j} (\x))} \leq K \abs{f^{\circ j} (\x) ^{m}}^{d + \gamma}$.
		
		Remember that we also get from Leau-Fatou theorem (theorem \ref{thm:LF}) that there exists $c > 0$ such that:
		\begin{equation}\label{eq:fxmd}
			\abs{f^{\circ j} (\x)^m}^{d} \leq c \frac{\abs{\x^m}^d}{1 + j \abs{\x^m}^d}.
		\end{equation}

		The generic term of the product $\prod_{j \geq 0} \frac{g_I (f^{\circ j+1}(\x))}{g_I (f^{\circ j}(\x))}$ is $1 + l (f^{\circ j} (\x))$ and $\abs{l (f^{\circ j} (\x))} \leq \frac{K c}{j^{1 + \gamma/d}}$. Thus the product converges uniformly on $U_0$ to a holomorphic function $u_I$.
		
		Then $g_I (f^{\circ j} (\x)) \to u_I (\x) g_I (\x) = \psi_I (\x)$ uniformly on $U_0$, so $\psi_I$ is well-defined and holomorphic on $U_0$. Moreover, it is clearly invariant by $f$.
		
		For any $\x \in U_0$, we will now show the following three inequalities:
		\begin{equation}
			|u (\x)| \leq 1 + K \abs{\x^{m}}^{\gamma}, \quad |u (\x)| \geq 1 - K \abs{\x^{m}}^{\gamma} \quad \text{and} \quad | \arg (u (\x))| \leq K \abs{\x^{m}}^{\gamma}
		\end{equation}
		for some $K > 0$.
		
		For the first inequality, recall that
		$$
		|u (\x)| = \left| \prod_{j \geq 0} \frac{g_I (f^{\circ j+1}(\x))}{g_I (f^{\circ j}(\x))} \right| = \left| \prod_{j \geq 0} (1 + l \circ f^{\circ j}(\x)) \right|.
		$$
		Using equation \eqref{eq:fxmd}, we obtain
		$$
		|u (\x)| \leq \prod_{j \geq 0} \left(1 + K c \left(\frac{\abs{\x^m}^d}{1 + j \abs{\x^m}^d}\right)^{1 + \tfrac{\gamma}{d}}\right) = \prod_{j \geq 0}\left(1 + K c \left(\frac{1}{j + \frac{1}{\abs{\x^m}^d}}\right)^{1 + \tfrac{\gamma}{d}}\right).
		$$
		After a change of indices, this product is smaller than
		$$
		|u (\x)| \leq \prod_{k \in \tfrac{1}{\abs{\x^m}^d} + \NN} \left(1 + \frac{Kc}{k^{1+ \tfrac{\gamma}{d}}} \right) = \exp \left( \sum_{k \in \tfrac{1}{\abs{\x^m}^d} + \NN} \ln \left(1 + \frac{Kc}{k^{1+ \tfrac{\gamma}{d}}} \right) \right).
		$$
		If $x > 0$, we have $\ln (1 + x) \leq x$ and thus
		$$
		|u (\x)| \leq \exp \left(\sum_{k \in \tfrac{1}{\abs{\x^m}^d} + \NN} \frac{Kc}{k^{1+ \tfrac{\gamma}{d}}} \right).
		$$
		Recall that $\displaystyle \sum_{j \geq j_0} \frac{1}{j^{1 + \alpha}} \sim_{j_0 \to \infty} \frac{1}{\alpha j_0^{\alpha}}$. Therefore,
		$$
		|u (\x)| \leq \exp \left( \frac{2Kcd}{\gamma \left(\tfrac{1}{\abs{\x^m}^d}\right)^{\tfrac{\gamma}{d}}}\right).
		$$
		Finally, we have $\exp (x) \leq 1 + 2 x$ for $x>0$ small enough, and we can conclude
		$$
		|u (\x)| \leq 1 + \frac{4Kcd}{\gamma} \abs{\x^{m}}^{\gamma}.
		$$

		In the same way, we have:
		\begin{align*}
			|u (\x)| &= \left| \prod_{j \geq 0} \frac{g_I (f^{\circ j+1}(\x))}{g_I (f^{\circ j}(\x))} \right| = \left| \prod_{j \geq 0} (1 + l \circ f^{\circ j}(\x)) \right|\\
			& \geq \prod_{j \geq 0} \left(1 -  Kc \left(\frac{\abs{\x^m}^d}{1 + j \abs{\x^m}^d}\right)^{1 + \tfrac{\gamma}{d}}\right) \qquad \text{if } \abs{l \circ f^{\circ j} (\x)} \leq 1 \quad \forall j.
		\end{align*}
		
		Therefore, we can make the same change of indexes:
		$$
		|u (\x)| \geq \prod_{j \geq 0}\left(1 -  Kc \left(\frac{1}{j + \frac{1}{\abs{\x^m}^d}}\right)^{1 + \tfrac{\gamma}{d}}\right) \geq \prod_{k \in \tfrac{1}{\abs{\x^m}^d} + \NN} \left(1 - \frac{Kc}{k^{1+ \tfrac{\gamma}{d}}} \right).
		$$
		We also have $\ln (1 - x) \geq - 2 x$ for $x>0$ small enough, so
		$$
		|u (\x)| \geq \exp \left( \sum_{k \in \tfrac{1}{\abs{\x^m}^d} + \NN} \ln \left(1 - \frac{Kc}{k^{1+ \tfrac{\gamma}{d}}} \right) \right) \geq \exp \left(\sum_{k \in \tfrac{1}{\abs{\x^m}^d} + \NN} \frac{-2Kc}{k^{1+ \tfrac{\gamma}{d}}} \right).
		$$
		As $\displaystyle \sum_{j \geq j_0} \frac{1}{j^{1 + \alpha}} \sim_{j_0 \to \infty} \frac{1}{\alpha j_0^{\alpha}}$,
		$$
		|u (\x)| \leq \exp \left( \frac{-Kcd}{\gamma \left(\tfrac{1}{\abs{\x^m}^d}\right)^{\tfrac{\gamma}{d}}}\right),
		$$
		and since $\exp (-x) \geq 1 - x$ for $x>0$,
		$$
		|u (\x)| \geq 1 - \frac{Kcd}{\gamma} \abs{\x^{m}}^{\gamma}.
		$$
		
		We have thus shown that there exists $\kappa > 0$ such that $1 - \kappa \abs{\x^{m}}^{\gamma} \leq |u (\x)| \leq 1 + \kappa \abs{\x^{m}}^{\gamma}$. In order to show that $u (\x) - 1 = O (\abs{\x^m}^{\gamma})$, we also need to consider the argument of $u (\x)$:
		$$
		| \arg (u (\x))| = \left|\arg \left( \prod_{j \geq 0} \frac{g_I (f^{\circ j+1}(\x))}{g_I (f^{\circ j}(\x))} \right) \right| \leq \sum_{j \geq 0} \left| \arg(1 + l \circ f^{\circ j}(\x)) \right|.
		$$
		For $x \in \CC$ such that $\abs{x} \leq 1/2$, we have $\abs{\arg (1+x)} = \abs{\arctan \frac{\Imm x}{1 + \Ree x}} \leq \abs{\frac{\Imm x}{1 + \Ree x}} \leq \frac{\abs{x}}{1 - \abs{x}} \leq 2 \abs{x}$. Therefore, if $\epsilon$ is small enough so that $\abs{l \circ f^{\circ j}(\x)} \leq 1/2$ for all $\x \in U_0$, we have
		$$
		| \arg (u (\x))| \leq \sum_{j \geq 0} 2 \abs{l \circ f^{\circ j}(\x)} \leq \sum_{j \geq 0}  K c \left(\frac{\abs{\x^m}^d}{1 + j \abs{\x^m}^d}\right)^{1 + \tfrac{\gamma}{d}}
		$$
		and a similar argument gives $| \arg (u (\x))| \leq \kappa \abs{\x^{m}}^{\gamma}$ for some $\kappa > 0$.

	\end{proof}

	\section{Approximate Fatou coordinates}\label{sect:App}
	
	We hence get an invariant holomorphic function for every $I \in \NN^n$. However these functions are not independent : if $I = J_1 + \ldots + J_k$, then we will have $\psi_I = \psi_{J_1} \ldots \psi_{J_k}$. Actually, they all come from applications of the form $\lim_{j \to \infty} (f^{\circ j} (\x))^{\Alp}$ with $\Alp \in \CC^n$ such that $\scal{\Alp, m} = 0$.

	\begin{definition}
		Since $\gcd (m) = 1$, there exists a $n \times n$ invertible matrix $\jM \in M_n (\NN)$ such that the first row of $\jM$ is $m$. Since $m_i = 0$ if $i \geq n - \bar{m} + 1$, we can assume $\jM$ to be of the form
		$$
		\jM = 
		\begin{pmatrix}
			\tilde{M} & 0\\
			0 & I_{\bar{m}}
		\end{pmatrix}
		$$
		with $\tilde{M} \in GL_{n - \bar{m}} (\ZZ)$ with non-negative coefficients.
		
		The inverse $\mathcal{N}$ of $\jM$ is also of the form
		$$
		\mathcal{N} =
		\begin{pmatrix}
			\tilde{N} & 0\\
			0 & I_{\bar{m}}
		\end{pmatrix}
		$$
		with $\tilde{N} \in GL_{n - \bar{m}} (\ZZ)$.
	\end{definition}
	
	\begin{rem}
		For all $i \in \{2, \ldots, n- \bar{m}\}$,  there is a negative entry in the $i$-th column of $\mathcal{N}$. We indeed have $\displaystyle \sum_{j=1}^{n - \bar{m}} m_j \mathcal{N}_{j,i} = 0$, and as $m_j > 0$ for $j \in \{1, n - \bar{m}\}$, we can find a negative number in the $i$-th column.
	\end{rem}
	
	From now on, for any vector $\x \in \CC^n$, we will write $\displaystyle \abs{x}= \sum_{i = 1}^n \abs{x_i}$ and $\displaystyle \norm{x}_\infty = \max_{1 \leq i \leq n} \abs{x_i}$. We use the same notations for matrices.
	
	For $i \geq 2$, let us define $\psi_i = \psi_{\jM_i}$, $g_i = g_{\jM_i}$ and $u_i = u_{\jM_i}$, where $\jM_i$ is the $i$-th row of $\jM$. As $\jM$ is invertible, there is no non-trivial vanishing linear combination of the $\jM_i$ and the $g_i$ maps are independent (that is, the map $\x \mapsto (g_2 (\x), \ldots g_n (\x))$ is surjective).
	
	Remark that as $\psi_i = u_i g_i$ and $u_i$ is close to $1$, we have $\psi_i = 0$ if and only if $g_i = 0$ for $\abs{\x^m}^\gamma$ small enough.

	For $\epsilon$, $\theta$ and $r$ small enough, let us define a domain $V$:
	$$
	V (\epsilon, \theta, r) = \left\{ (z, \w) \in \CC^n : \abs{z} > \epsilon^{-1}, \abs{\arg (z)} < \theta, \abs{\w^{\mathcal{N}_i}} < r \abs{z}^{-\gamma/d - \Ree a_i} \: \forall i \right\}.
	$$
	
	\begin{rem}
		We have that $V \subset \CC \times (\CC^*)^{n-\bar{m}-1} \times \CC^{\bar{m}}$. We indeed know that for any $j \in \{2, \ldots, n - \bar{m}\}$, there exists $i$ such that $\mathcal{N}_{i,j}<0$, and then the equation $\abs{\w^{\mathcal{N}_i}} < r \abs{z}^{-\gamma/d - \Ree a_i}$ implies that $w_j$ is non-zero for all $(z, \w) \in V$.
	\end{rem}
	
	Let $\Phi_\ell : U_\ell \to \CC^n$ be the map defined by:
	$$
	\Phi_\ell (\x) = (1/\x^M, \psi_2 (\x), \ldots, \psi_{n} (\x)).
	$$
	
	\begin{rem}\label{rem:V}
		Let $\x \in \CC^d$ such that the orbit $f^{\circ j} (\x)$ converges to $0$, then we know that $f^{\circ j} (\x) \in U_\ell$ for some $\ell$ and for $j$ big enough. We also have $\Phi_\ell (f^{\circ j} (\x)) \in V$ for $j$ big enough. If we set $(z_j, \w_j) = \Phi_\ell (f^{\circ j} (\x))$ when it is defined (it is well-defined for $j$ big enough such that $f^{\circ j} (\x) \in U_\ell$), then clearly $\abs{z_j} > \epsilon^{-1}$ and $\abs{\arg (z_j)} < \theta$. Moreover as $\w_j$ does not depend on $j$ (as $\psi_i$ is invariant under $f$), the equations $\abs{\w_j^{\mathcal{N}_i}} < r \abs{z_j}^{-\gamma/d - \Ree a_i}$ will be satisfied for $j$ big enough as $\abs{z_j} \to 0$ and $-\gamma/d - \Ree a_i > 0$.
	\end{rem}
	
	\begin{lem}\label{lem:V}
		For $r$ small enough, we have $V \subset \Phi_\ell (U_\ell)$ and $\Phi_\ell : \Phi_\ell^{-1} (V) \to V$ is a biholomorphism.
	\end{lem}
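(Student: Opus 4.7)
The strategy is to realize $\Phi_\ell$ as a multiplicative perturbation of the ``model map''
\[
G_\ell(\x) \;=\; \bigl(1/\x^M,\, g_2(\x),\, \ldots,\, g_n(\x)\bigr),
\]
whose inverse is completely explicit because $\jM$ is an invertible integer matrix. Given $(z, \w) \in V$, the first equation $1/\x^M = z$ together with the sectorial information built into the component $U_\ell$ determines $\x^m = \zeta_\ell z^{-1/d}$ for a fixed $d$-th root of unity $\zeta_\ell$; substituting into $g_i(\x) = w_i$ then gives $\x^{\jM_i} = w_i \zeta_\ell^{-\lambda_i} z^{\lambda_i/d}$ for $i \ge 2$, and inverting $\jM$ via $\mathcal{N}$ yields an explicit formula $x_k(z, \w) = \prod_i y_i^{\mathcal{N}_{ki}}$ where $y_1 = \x^m$ and $y_i = \x^{\jM_i}$. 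A direct computation, using $\sum_j \mathcal{N}_{ij} \langle a, \jM_j\rangle = a_i$ and $\langle a, m\rangle = -1/d$, yields $|x_i(z, \w)| = |z|^{\Ree a_i} |\w^{\mathcal{N}_i}|$ up to a bounded multiplicative constant, so the inequality $|\w^{\mathcal{N}_i}| < r |z|^{-\gamma/d - \Ree a_i}$ that defines $V$ is precisely what is needed to guarantee $|x_i| < |\x^m|^\gamma$, i.e.\ that the preimage lies in $U_\ell$. Hence $G_\ell$ restricts to a biholomorphism of an open subset of $U_\ell$ onto a set containing $V$ once $r$ is small.

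Next I would rewrite $\Phi_\ell(\x) = (z, \w)$ as the fixed-point equation
\[
\x \;=\; T(\x) \;:=\; G_\ell^{-1}\bigl(z,\, w_2/u_2(\x),\, \ldots,\, w_n/u_n(\x)\bigr),
\]
and produce its unique solution in $U_\ell$. By Proposition~\ref{prop:psi_I}, $u_i - 1 = O(|\x^m|^\gamma)$ uniformly on $U_\ell$, so $T$ is a small perturbation of the constant map $\x \mapsto G_\ell^{-1}(z, \w)$. Following the indication in the introduction, I would proceed inductively after passing to coordinates $\y = \jM \log \x$: with $y_1$ already fixed by the first coordinate, successively apply the one-dimensional Rouché theorem to solve $\psi_i = w_i$ for $y_i$, $i = 2, \ldots, n$. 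At step $i$, the contour is a small circle in the $y_i$-variable inside $V$ on which $|g_i - w_i|$ strictly dominates $|g_i(u_i - 1)| = |\psi_i - g_i|$; the latter is controlled by the sup bound $|u_i - 1| \le K|\x^m|^\gamma$ together with the argument bound on $u_i$ also provided by Proposition~\ref{prop:psi_I}. The coupling introduced by the dependence of $u_j$ on all of $\x$ is absorbed by shrinking $r$ (and $\epsilon$) so that the contours from earlier inductive steps remain valid as later coordinates are adjusted.

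The principal obstacle is exactly this coupling: the sup bound on $u_i - 1$ does not directly yield the derivative estimates a naive Banach contraction would demand, and the inductive Rouché argument circumvents this by isolating a single complex variable at each step, where the argument principle needs only the sup bound on a contour. Once the inverse $V \to U_\ell$ has been built set-theoretically, the biholomorphism conclusion follows from the inverse function theorem: the Jacobian of $\Phi_\ell$ differs from the (explicitly non-singular) Jacobian of $G_\ell$ by terms Cauchy-bounded by $|u_i - 1|$ on a slightly smaller subdomain, and hence remains invertible throughout $\Phi_\ell^{-1}(V)$ provided $r$ and $\epsilon$ are small enough.
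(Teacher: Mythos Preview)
Your overall strategy matches the paper's: invert the model map $G_\ell$ explicitly (your formula $|x_i| \asymp |z|^{\Ree a_i}|\w^{\mathcal{N}_i}|$ is exactly the paper's $\phi^{-1}_i(z,\w) = z^{a_i}\w^{\mathcal{N}_i}$), then pass from $G_\ell$ to $\Phi_\ell$ by a Rouch\'e argument. But your inductive Rouch\'e is not correctly set up, and the gap sits precisely where you flag the ``principal obstacle''.

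The paper does \emph{not} solve $\psi_2 = w_2$, then $\psi_3 = w_3$, etc., adjusting earlier solutions as later ones are found; that scheme is not well-defined because each $\psi_i$ depends on all of $\x$. Instead one introduces interpolating maps
\[
G_k(\x) \;=\; \bigl(1/\x^M,\; g_2(\x),\ldots,g_k(\x),\;\psi_{k+1}(\x),\ldots,\psi_n(\x)\bigr),
\]
so that $G_n = G_\ell$ and $G_1 = \Phi_\ell$, and proves by \emph{decreasing} induction on $k$ that each $G_k$ is a bijection onto $V$. Assuming $G_k$ injective, the level set
\[
A_k \;=\; \bigl\{\x \in U_\ell : (G_k)_j(\x) = (z,\w)_j \text{ for all } j \neq k\bigr\}
\]
is a Riemann surface on which $g_k$ is a global coordinate, and one applies Rouch\'e there to $\psi_k - w_k$ versus $g_k - w_k$. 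The equations $\psi_j = w_j$ for $j>k$ are already baked into $A_k$, so no ``later adjustment'' ever occurs; this is how the coupling is dissolved.

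The second gap is the contour. It is not a small circle in a logarithmic coordinate $y_k$; it is the boundary $\partial A_k = \bigcup_i \partial_i A_k$, where $\partial_i A_k$ is cut out by $|x_i| = |z|^{-\gamma/d}$. On $\partial_i A_k$ the defining inequality $|\w^{\mathcal{N}_i}| < r\,|z|^{-\gamma/d - \Ree a_i}$ of $V$, combined with $x_i = \phi^{-1}_i(\phi(\x))$ and the bounds $\tfrac12 < |w_j/g_j(\x)| < \tfrac32$ for $j>k$, forces either $|w_k| < \tfrac12|g_k(\x)|$ or $|w_k| > 2|g_k(\x)|$ according to the sign of $\mathcal{N}^i_k$; in both cases $|g_k - w_k| > \tfrac12|g_k| > |\psi_k - g_k|$. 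This is the only place $r$ is used, and your ``small circle in the $y_i$-variable'' never sees the structure of $V$, so it cannot produce the required Rouch\'e inequality.
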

	
	\begin{proof}
		Without loss of generality we can assume $\ell=0$.
		
		We claim that the map $\phi : U_0 \to \CC^n$ defined by :
		$$
		\phi (\x) = \left(\frac{1}{\x^{M}}, g_2 (\x), \ldots, g_{n} (\x) \right)
		$$
		is invertible.
		
		We can indeed write $\phi = \phi_1 \circ \phi_2 \circ \phi_3$ with:
		\begin{itemize}
			\item $\phi_3 (\x) = (\x^m, \x^{\mathcal{M}_2}, \ldots, \x^{\mathcal{M}_{n}}) = \x^{\jM}$,
			\item $\phi_2 (\x) = (\x_1, \x_2 \x_1^{\lambda_{\mathcal{M}_2}}, \ldots, \x_n \x_1^{\lambda_{\mathcal{M}_{n}}})$,
			\item $\phi_1 (\x) = (\frac{1}{\x_1^d}, \x_2, \ldots, \x_n)$.
		\end{itemize}
		Moreover, these three maps are invertible:
		\begin{itemize}
			\item $\phi_3$ is invertible on $U_0$, and its inverse is $\phi_3^{-1} (\x) = \x^{\mathcal{N}}$. We indeed know that $\x \mapsto \x^\jM$ is invertible on $(\CC^*)^n$ for any matrix $M \in GL_n (\ZZ)$, and if $i \in \{1, \ldots, n\}$ is such that $U_0$ intersects $\{x_i = 0\}$, then we have chosen $\jM$ so that $\jM_i = (0, \ldots, 0, 1, 0, \ldots, 0)$ and thus $\x \mapsto \x^{\jM}$ is invertible on $(\CC^*)^{n - \bar{m}} times \CC^{\bar{m}}$, in particular also on $U_0$.
			\item $\phi_2^{-1} (\x) = (\x_1, \x_2 \x_1^{-\lambda_{\mathcal{M}_2}}, \ldots, \x_n \x_1^{-\lambda_{\mathcal{M}_{n}}})$ as $\x_1 \neq 0$,
			\item $\phi_1^{-1} (\x) = (\frac{1}{\x_1^{1/d}}, \x_2, \ldots, \x_n)$. We recall that as $\x \in U_0$, we can define the logarithm of $\x^{m}$ and hence define $(\x^{m})^{a}$ for $a \in \CC$. Moreover, for any $\y \in \phi (U_0)$, only one of the $-d$-roots of $\y_1$ lies in $U_0$ (meaning that there exists $\x \in U_0$ such that $(\x^m)^{-d} = \y_1$). This shows that $\phi_1^{-1}$ is well-defined and holomorphic and the claim is proved.
		\end{itemize}
		We recall that as $\x \in U_0$, we can define the logarithm of $\x^{m}$ and hence define $(\x^{m})^{a}$ for $a \in \CC$. Moreover, for any $\y \in \phi (U_0)$, only one of the $-d$-roots of $\y_1$ lies in $U_0$ (meaning that there exists $\x \in U_0$ such that $(\x^m)^{-d} = \y_1$). This shows that $\phi_1^{-1}$ is well-defined and holomorphic and the claim is proved.
		
		We would like to be more precise when it comes to describing the coefficients of $\phi^{-1} (z, \w)$.
		
		We have $\phi_2^{-1} \circ \phi_1^{-1} (z, \w) = (z^{-1/d}, w_2 z^{\frac{\lambda_{\mathcal{M}_2}}{d}}, \ldots, w_n z^{\frac{\lambda_{\mathcal{M}_n}}{d}})$ with $\lambda_{\mathcal{M}_i} = d \scal{a, \mathcal{M}_i}$ so:
		$$
		\phi_2^{-1} \circ \phi_1^{-1} (z, \w) = (z^{-1/d}, w_2 z^{\scal{a, \mathcal{M}_2}}, \ldots, w_n z^{\scal{a, \mathcal{M}_i}}).
		$$
		
		We then apply $\jM^{-1} = \mathcal{N}$, and we get that for all $i \in \{1, \ldots, n\}$, the $i$-th coordinate of $\phi$ can be written:
		$$
		\phi^{-1}_i (z, \w) = z^{-\mathcal{N}_{i,1}/d} w_2^{\mathcal{N}_{i,2}} z^{\mathcal{N}_{i,2} \scal{a, \mathcal{M}_2}} \ldots w_n^{\mathcal{N}_{i,n}} z^{\mathcal{N}_{i,n} \scal{a, \mathcal{M}_n}}.
		$$
		
		In particular, the exponent of $z$ in $\phi^{-1}_i$ is:
		\begin{align*}
			- \frac{\mathcal{N}_{i,1}}{d} + \sum_{j=2}^n \mathcal{N}_{i,j} \scal{a, \mathcal{M}_j} &= \sum_{j=1}^n \mathcal{N}_{i,j} \scal{a, \mathcal{M}_j}\\
			&= \sum_{j=1}^n \sum_{k=1}^n \mathcal{N}_{i,j} a_k \mathcal{M}_{j,k}\\
			&= \sum_{k=1}^n a_k \sum_{j=1}^n \mathcal{N}_{i,j} \mathcal{M}_{j,k}\\
			&= \sum_{k=1}^n a_k \delta_{i,k} = a_i
		\end{align*}
		as $\scal{a,m} = - \frac{1}{d}$ and $m = \mathcal{M}_1$.
		
		We have thus shown that $\phi^{-1}_i (z, \w) = z^{a_i} \w^{\mathcal{N}_i}$.
		
		We recall that for all $I \in \NN^d$, $\psi_I = u_I g_I$ with $u_I - 1 = O(\abs{\x^m}^\gamma)$.
		
		We can therefore assume that $\epsilon$ is small enough so that for all $i \in \{2, \ldots, n\}$, we have $|u_{\mathcal{M}_i} - 1| < 1/2$. This implies:
		\begin{equation}\label{eq:psimg}
			\abs{\psi_i - g_i} < \tfrac{1}{2} \abs{g_i} \text{ and } \tfrac{1}{2}  < \abs{\frac{\psi_i}{g_i}} < \tfrac{3}{2} .	
		\end{equation}

		Let us now pick $(z,w_2, \ldots, w_n) \in V$. We want to show that this point has exactly one preimage by $\Phi_0$.
		
		In order to do so, we will study maps of the form:
		$$
		G_k (\x) = \left( \frac{1}{\x^M}, g_2 (\x), \ldots, g_k (\x), \psi_{k+1} (\x), \ldots, \psi_n (\x) \right)
		$$
		for $k \in \{1, \ldots, n\}$. We remark that $G_n = \phi$ and $G_1 = \Phi_0$, and we will prove recursively that they are injective.
		
		For $k \in \{1, \ldots, n\}$, let $pr_{\check{k}}: \CC^n \to \CC^{n-1}$ be the projection defined by
		$$
		pr_{\check{k}} (y_1, \ldots, y_n) = (y_1, \ldots, y_{k-1}, y_{k+1}, \ldots, y_n).
		$$
		
		Let us assume that we have shown that $G_{k}$ is injective for some $k \in \{2, \ldots, n\}$, we define a set $A_k$ as:
		$$
		A_k = \left\{ \x \in U_0, pr_{\check{k}} (G_k (\x)) = pr_{\check{k}} (z, \w), \abs{x_j} < \abs{z}^{-\gamma/d} \forall j \in \{1, \ldots, n\} \right\}.
		$$
		Therefore, for all $\x \in A_k$ we have $\x^m = z^{-1/d}$, $g_j (\x) = w_j$ for $j<k$, and $\psi_j (\x) = w_j$ for $j>k$. Notice that $A_k$ is a Riemann surface with boundary as $G_k$ is injective. We indeed have that $g_k : A_k \to \CC$ is injective, which implies that $A_k$ is smooth.
		
		If we show that $\abs{\psi_k - g_k} < \abs{g_k - w_k}$ on the boundary of $A_k$, Rouché's theorem implies that $\psi_k - w_k$ has the same number of $0$ in $A_k$ as $g_k - w_k$, that is exactly one as $G_k$ is injective. Thus $(z,w_2, \ldots, w_n)$ only has one preimage by $G_{k-1}$ as such a preimage needs to be in $A_k$.
		
		We therefore only need the estimate on the boundary of $A_k$.		
		We can decompose $\partial A_k = \bigcup_{i=1}^n \partial_i A_k$ with:
		$$
		\partial_i A_k = \left\{\x \in U_0, pr_{\check{k}} (G_k (\x)) = pr_{\check{k}} (z, \w), \abs{x_j} \leq \abs{z}^{-\gamma/d} \forall j \neq i, \abs{x_i} = \abs{z}^{-\gamma/d} \right\}.
		$$
		
		On $\partial_i A_k$, we thus have:
		\begin{equation}\label{rxi}
			\abs{\w^{\mathcal{N}_i}} \abs{z}^{\Ree a_i} < r \abs{z}^{-\gamma/d} = r \abs{x_i},
		\end{equation}
		where the first inequality comes from the definition of $V$ and the second from the definition of $\partial_i A_k$.	
		Moreover
		$$
		\abs{x_i} = \abs{\phi^{-1}_i (\phi (\x))} = \abs{\phi^{-1}_i (z, w_2, \ldots, w_{k-1}, g_k (\x), \ldots, g_n (\x))}.
		$$
		
		However we also know that for $j>k$, $\psi_k (\x) = w_k$ which implies that $1/2 < \abs{\frac{w_k}{g_k (\x)}} < 2$. We use this inequality in the expression of $\phi^{-1}_i$:
		\begin{align*}
			\abs{\phi^{-1}_i (\phi(\x))} &= \abs{z^{a_i} \cdot \prod_{j=2}^{k-1} w_j^{\mathcal{N}^i_j} \cdot g_k (\x)^{\mathcal{N}^i_k} \cdot \prod_{j=k+1}^n g_j (\x)^{\mathcal{N}^i_j}}\\
			&< \abs{z^{a_i} \cdot \prod_{j=2}^{k-1} w_j^{\mathcal{N}^i_j} \cdot g_k (\x)^{\mathcal{N}^i_k} \cdot \prod_{j=k+1}^n 2^{\abs{\mathcal{N}^i_j}} w_j^{\mathcal{N}^i_j}}	\\
			&= 2^{\abs{\mathcal{N}}} \abs{z^{a_i}} \abs{g_k (\x)}^{\mathcal{N}^i_k}. \prod_{j \neq k}\abs{ w_j}^{\mathcal{N}^i_j}.
		\end{align*}
		
		We apply this result to \eqref{rxi}:
		$$
		\abs{\w^{\mathcal{N}_i}} \abs{z}^{\Ree a_i} < r 2^{\abs{\mathcal{N}}} \abs{z^{a_i}} \abs{g_k (\x)^{\mathcal{N}^i_k} \prod_{j \neq k} w_j^{\mathcal{N}^i_j} },
		$$
		and as $\frac{\abs{z^{a_i}}}{\abs{z}^{\Ree a_i}} = e^{- \Imm a_i \arg z} < e^{\Imm a_i \theta}$, we have:
		
		\begin{equation}\label{eq:wNi}
			\abs{w_k}^{\mathcal{N}^i_k} < r 2^{\abs{\mathcal{N}}} e^{\Imm a_i \theta} \abs{g_k (\x)}^{\mathcal{N}^i_k}.
		\end{equation}

		We now assume $r < 2^{-\norm{\mathcal{N}}_{\infty}} 2^{-\abs{\mathcal{N}}} e^{-\norm{a}_\infty \theta}$, so that $r 2^{\abs{\mathcal{N}}} e^{\Imm a_i \theta} < 2^{-\norm{\mathcal{N}}_{\infty}}$ for all $i$.
		
		There are three cases to consider.
		\begin{enumerate}
			\item If $\mathcal{N}^i_k = 0$, then equation \eqref{eq:wNi} becomes $1 < r 2^{\abs{\mathcal{N}}} e^{\norm{a}_\infty \theta}$ which does not hold for our choice of $r$. As any point in $\partial_i A_k$ should satisfy this equation, we conclude that $\partial_i A_k$ is empty.
			\item If $\mathcal{N}^i_k > 0$, equation \eqref{eq:wNi} becomes $\abs{w_k} < (r 2^{\abs{\mathcal{N}}} e^{\Imm a_i \theta})^{\frac{1}{\mathcal{N}^i_k}} \abs{g_k (\x)}$ which implies $\abs{w_k} < 1/2 \abs{g_k (\x)}$ hence $\abs{g_k (\x) - w_k} \geq \abs{g_k (\x)} - \abs{w_k} > 1/2 \abs{g_k (\x)} > \abs{\psi_k (\x) - g_k (\x)}$ by equation \eqref{eq:psimg}.
			\item Similarly, if $\mathcal{N}^i_k < 0$, then $\abs{w_k} > (r 2^{\abs{\mathcal{N}}} e^{\Imm a_i \theta})^{\frac{1}{\mathcal{N}^i_k}} \abs{g_k (\x)}$ so $\abs{w_k} > 2 \abs{g_k (\x)}$. We thus have $\abs{g_k (\x) - w_k} \geq \abs{w_k} - \abs{g_k (\x)} > 1/2 \abs{g_k (\x)} > \abs{\psi_k (\x) - g_k (\x)}$.
		\end{enumerate}
		
		This concludes the induction. We have thus shown $\Phi_0 : \Phi_0^{-1} (V) \to V$ is injective, and therefore a biholomorphism.
		
	\end{proof}
	
	We can thus use $\Phi_\ell$ to conjugate $f$ to another map $\tilde{F}$ on each open set $U_\ell$. As $\psi_i$ is invariant under $f$ for all $i \in \{2, \ldots, n\}$, $\tilde{F}$ is of the form:
	$$
	\Phi_\ell \circ f \circ \Phi_\ell^{-1} (z, w_2, \ldots, w_n) = (\tilde{f} (z, w_2, \ldots, w_n), w_2, \ldots, w_n).
	$$
	
	If we write $\Phi_\ell^{-1} (z, \w) = \x$, then:
	\begin{equation}\label{eq:hh}
		\begin{aligned}
			\tilde{f} (z, \w) &= \frac{1}{f \circ \Phi_\ell^{-1}(z, \w)^M} = \frac{1}{f(\x)^M} = \frac{1}{\x^M} + 1 + h (\x)\\
			&= z + 1 + \tilde{h}(z, \w)
		\end{aligned}
	\end{equation}
	with $h (\x) = O (\x)$. 
	
	Since $\x \in U_\ell$, we have that $\abs{x_i} < \abs{z}^{-\frac{\gamma}{d}}$, so there exists a constant $K > 0$ such that:
	\begin{equation}\label{eq:mh}
		\abs{\tilde{h} (z, \w)} < K \abs{z}^{-\frac{\gamma}{d}}
	\end{equation}
	for all $(z, \w) \in V$.

	It implies that if $\epsilon$ is small enough, we have $\abs{f(z, \w)} > \abs{z} + \frac{1}{2}$.

	\begin{lem}\label{lem:Vestimates}
		If $\epsilon$, $\theta$ and $r$ are small enough, then $\tilde{F} (V) \subset V$ and there exists a constant $K' > 0$ such that
		$$
		\abs{\frac{\partial \tilde{h}}{\partial z} (z, \w)} < K' \abs{z}^{-1-\frac{\gamma}{d}}
		$$
		for every $(z, \w) \in V$.
	\end{lem}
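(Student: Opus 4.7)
The statement has two parts, which I address separately.

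For the invariance $\tilde{F}(V) \subset V$, since $\tilde{F}$ fixes the $\w$-coordinates, only the three defining conditions of $V$ need be verified for $z \mapsto \tilde{f}(z,\w) = z + 1 + \tilde{h}(z,\w)$. Because $\Ree z > 0$ in the sector $\abs{\arg z} < \theta$, a direct computation gives $\abs{z+1} \geq \abs{z} + \cos\theta$, and combined with \eqref{eq:mh} this yields $\abs{\tilde{f}(z,\w)} > \abs{z}$ for $\epsilon$ small. This immediately implies $\abs{\tilde{f}} > \epsilon^{-1}$. Moreover, since $-\gamma/d - \Ree a_i > 0$ by the choice of $\gamma$, the map $t \mapsto t^{-\gamma/d - \Ree a_i}$ is increasing on $\RR_{>0}$, so the third condition $\abs{\w^{\mathcal{N}_i}} < r\abs{\tilde{f}}^{-\gamma/d - \Ree a_i}$ follows as well. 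For the angular condition, one checks that $\abs{\arg(z+1)} \leq \abs{\arg z}$ with a strict decrease of order $\sin(\arg z)/\abs{z}$ away from the real axis, whereas the perturbation by $\tilde{h}$ shifts the argument by at most $O(\abs{z}^{-1-\gamma/d})$; balancing these two effects yields $\abs{\arg \tilde{f}} < \theta$ for $\epsilon$ small.

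For the derivative bound I apply Cauchy's integral formula in the $z$-variable. Given $(z, \w) \in V$, consider the closed disk $\overline{D}(z, c\abs{z})$ for a fixed $c \in (0,1)$, and claim that this disk (with $\w$ fixed) lies in a slightly enlarged domain $V' := V(\epsilon', \theta', r')$, where $(\epsilon', \theta', r')$ are larger than $(\epsilon, \theta, r)$ but still within the smallness range of Lemma \ref{lem:V}. Verifying this inclusion reduces to three elementary checks: (a) $(1-c)\abs{z} > (\epsilon')^{-1}$, satisfied for $\epsilon' > \epsilon/(1-c)$; (b) $\abs{\arg z' - \arg z} \leq \arcsin(c)$ for $z' \in \overline{D}(z, c\abs{z})$, handled by $\theta' > \theta + \arcsin(c)$; and (c) using $\abs{z'} \geq (1-c)\abs{z}$ together with $-\gamma/d - \Ree a_i > 0$, the third condition is preserved provided $r' \geq r(1-c)^{\gamma/d + \Ree a_i}$ (the exponent is negative, so the factor exceeds $1$). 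On $V'$ the estimate \eqref{eq:mh} yields $\abs{\tilde{h}(z',\w)} \leq K\abs{z'}^{-\gamma/d} \leq K(1-c)^{-\gamma/d}\abs{z}^{-\gamma/d}$, and Cauchy's inequality gives
$$
\abs{\frac{\partial \tilde{h}}{\partial z}(z,\w)} \leq \frac{1}{c\abs{z}} \sup_{\abs{z'-z} = c\abs{z}} \abs{\tilde{h}(z',\w)} \leq \frac{K(1-c)^{-\gamma/d}}{c}\abs{z}^{-1-\gamma/d},
$$
which is the desired bound with $K' = K(1-c)^{-\gamma/d}/c$.

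The only technical subtlety is parameter bookkeeping: one must start with $\epsilon$, $\theta$, $r$ small enough that the enlarged domain $V'$ still satisfies the smallness hypotheses needed for \eqref{eq:mh} and Lemma \ref{lem:V}, which is achieved by leaving a fixed amount of slack in the initial choice. Apart from this routine adjustment and the careful handling of the angular condition in the first part, the argument is a direct application of the Cauchy estimate to a holomorphic function with known decay.
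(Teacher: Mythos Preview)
Your proof is correct, and the derivative bound is obtained exactly as in the paper: by Cauchy's estimate on a disk of radius proportional to $|z|$, after checking that such a disk stays in a domain where \eqref{eq:mh} holds. The only cosmetic difference is that you enlarge $V$ to $V' = V(\epsilon',\theta',r')$ and require the smallness hypotheses to survive the enlargement, whereas the paper shrinks first to $V(\epsilon/2,\theta/2,r/2)$ and shows disks around those points land in $V(\epsilon,\theta,r)$; after relabelling the parameters these are the same argument.

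For the invariance $\tilde{F}(V)\subset V$, however, the paper takes a shorter route than yours. Since $V \subset \Phi_\ell(U_\ell)$ by Lemma~\ref{lem:V} and $f(U_\ell)\subset U_\ell$ by Proposition~\ref{prop:cvgce}, one has $\tilde{F}(z,\w)=\Phi_\ell(f(\Phi_\ell^{-1}(z,\w)))\in\Phi_\ell(U_\ell)$, and the first coordinate of any point of $\Phi_\ell(U_\ell)$ automatically satisfies $|\cdot|>\epsilon^{-1}$ and $|\arg(\cdot)|<\theta$. This bypasses entirely your balancing argument for the angular condition (the $\sin(\arg z)/|z|$ gain from $z\mapsto z+1$ against the $O(|z|^{-1-\gamma/d})$ perturbation from $\tilde{h}$). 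Your direct estimate is valid, but it duplicates work already encoded in the invariance of $U_\ell$; the paper's version is cleaner precisely because it reuses that upstream result. Both then handle the third defining inequality of $V$ identically, via $|\tilde f(z,\w)|>|z|$ and the positivity of $-\gamma/d-\Ree a_i$.
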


	\begin{proof}
		Consider a point $(z, \w)$ in $V$. Since $V \subset \Phi_\ell (U_\ell)$, we have that $(\tilde{f} (z, \w), \w) \in \Phi_\ell (U_\ell)$, and thus $\abs{\tilde{f} (z, \w)} > \epsilon^{-1}$ and $\abs{\arg (\tilde{f} (z, \w))} < \theta$. Moreover we know that $\abs{\tilde{f} (z, \w)} > \abs{z}$, so:
		$$
		\abs{\w^{\mathcal{N}_i}} < r \abs{z}^{-\gamma/d - \Ree a_i} < r \abs{\tilde{f} (z, \w)}^{-\gamma/d - \Ree a_i}
		$$
		for all $i \in \{1, \ldots, n\}$. Therefore $\tilde{F} (z, \w) \in V$.
		
		In order to prove the bound on $\tfrac{\partial \tilde{h}}{\partial z}$, we first show that there exists $\rho > 0$ such that for all $(z_0, \w_0) \in V (\epsilon / 2, \theta /2, r/2)$, we have that if $\abs{z - z_0} < \rho \abs{z_0}$, then $(z, \w_0) \in V (\epsilon, \theta, r)$.
		
		Let us consider $(z_0, \w_0) \in V (\epsilon / 2, \theta /2, r/2)$ and $z$ such that $\abs{z - z_0} < \rho \abs{z_0}$. Then:
		$$
		\abs{z} > (1- \rho) \abs{z_0} > (1 - \rho) 2 \epsilon^{-1}.
		$$
		so $\abs{z}>\epsilon^{-1}$ if $\rho$ is small enough. Moreover since $\abs{\frac{z}{z_0} - 1} < \rho$, we have $\abs{\arg (\frac{z}{z_0})} < \arcsin \rho$ so:
		$$
		\abs{\arg z} < \abs{\arg z_0} + \arcsin \rho < \theta/2 + \arcsin \rho
		$$
		so $\abs{\arg z} < \theta$ for $\rho$ small enough. Finally, since $\abs{z_0} < (1 - \rho)^{-1} \abs{z}$, we have for all $i$:
		$$
		\abs{\w_0^{\mathcal{N}_i}} < \frac{r}{2} \abs{z_0}^{-\gamma/d - \Ree a_i} < \frac{r}{2} [ (1 - \rho)^{-1} \abs{z}]^{-\gamma/d - \Ree a_i}
		$$
		so $\abs{\w_0^{\mathcal{N}_i}} < r \abs{z}^{-\gamma/d - \Ree a_i}$ for $\rho$ small enough as $-\gamma/d - \Ree a_i > 0$. Hence $(z, \w_0) \in V (\epsilon, \theta, r)$ if $\rho$ is small enough.
		
		Let $(z_0, \w_0) \in V (\epsilon / 2, \theta /2, r/2)$, then we have shown that $D (z_0, \rho \abs{z_0}) \times \{\w_0\} \subset V (\epsilon, \theta, r)$, so the map
		$$
		\tilde{h}_{\w_0} : z \in D (z_0, \rho \abs{z_0}) \mapsto \tilde{h} (z, \w_0)
		$$
		is well defined and using equation \eqref{eq:mh} we obtain:
		$$
		\abs{\tilde{h}_{\w_0}} < K \abs{z}^{-\gamma/d} < K (1 - \rho)^{-\gamma/d} \abs{z_0}^{-\gamma/d}.
		$$
		Thanks to Cauchy's estimate we then have:
		$$
		\abs{\frac{\partial \tilde{h}}{\partial z} (z_0, \w_0)} = \abs{\tilde{h}_{\w_0}' (z_0)} <  K (1 - \rho)^{-\gamma/d} \abs{z_0}^{-\gamma/d} (\rho \abs{z_0})^{-1} < K' \abs{z_0}^{-1-\gamma/d}.
		$$
		
	\end{proof}
	
	Following the previous Lemmas, if $\epsilon$, $\theta$ and $r$ be small enough and we denote by $V = V (\epsilon, \theta, r)$, we have that:
	\begin{equation}\label{eq:TildeF}
		\begin{matrix}
			\tilde{F} : & V & \to & V\\
			& (z,w_2,\ldots,w_n) & \mapsto & (\tilde{f}(z,w_2,\ldots,w_n),w_2,\ldots,w_n)
		\end{matrix}
	\end{equation}
	is well-defined, and satisfies $\tilde{F} = \Phi_{\ell} \circ f \circ \Phi_{\ell}^{-1}$ in its domain of definition.

	\section{Existence of Fatou coordinates}\label{sect:Fatou}
	
	Let $\epsilon$, $\theta$ and $r$ be small enough so that lemmas \ref{lem:V} and \ref{lem:Vestimates} hold in $V = V (\epsilon, \theta, r)$. The goal of this section is to prove the following result:
	
	\begin{prop}\label{prop:Main}
		There exist an open set $W \in \CC^n$ and a biholomorphism $\B : V \to W$ such that $\B$ conjugates holomorphically the map $\tilde{F}$ (see equation \eqref{eq:TildeF}) to $(z, \w) \mapsto (z+1, \w)$ and
		$$
		\bigcup_{j \in \NN} T_j (W) = \CC \times (\CC^*)^{n-\bar{m}-1} \times \CC^{\bar{m}}.
		$$
	\end{prop}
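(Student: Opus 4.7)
The plan is to obtain $\B$ by assembling the one-variable Fatou coordinates produced by Lemma \ref{lem:beta}. Write $\w = (w_2, \ldots, w_n)$ and $V_{\w} = \{z \in \CC : (z,\w) \in V\}$. For each $\w$ such that $V_\w$ is non-empty, Lemma \ref{lem:beta} provides a univalent holomorphic $\beta_\w : V_\w \to \CC$ satisfying Abel's equation $\beta_\w(\tilde{f}(z,\w)) = \beta_\w(z) + 1$, obtained as the limit $\beta_\w(z) = \lim_{j \to +\infty} (\tilde{f}^{\circ j}(z, \w) - j)$. I would then set $\B(z,\w) := (\beta_\w(z),w_2,\ldots,w_n)$ on all of $V$. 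The bound $\abs{\tilde{h}(z,\w)} < K\abs{z}^{-\gamma/d}$ from \eqref{eq:mh} together with the lower bound $\abs{\tilde{f}(z,\w)} > \abs{z} + \tfrac{1}{2}$ noted after Lemma \ref{lem:Vestimates} make the telescoped series $\beta_\w(z) - z = \sum_{j \geq 0} \tilde{h}(\tilde{f}^{\circ j}(z,\w),\w)$ converge locally uniformly on $V$, so $\B$ is jointly holomorphic on $V$.

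The conjugacy and biholomorphism properties then follow essentially for free. Since $\tilde{F}$ preserves the $\w$-coordinates, the Abel equation gives
$$
\B \circ \tilde{F}(z,\w) \;=\; (\beta_\w(\tilde{f}(z,\w)), \w) \;=\; (\beta_\w(z)+1, \w),
$$
which is the required conjugacy to $(z,\w) \mapsto (z+1,\w)$. Injectivity of $\B$ reduces to that of each $\beta_\w$, which is supplied by Lemma \ref{lem:beta}; being an injective holomorphic map between open subsets of $\CC^n$, $\B$ is a biholomorphism onto its image $W := \B(V)$.

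For the covering identity, fix $(z_0,\w_0) \in \CC \times (\CC^*)^{n-\bar{m}-1} \times \CC^{\bar{m}}$. By the remark following the definition of $\mathcal{N}$, for such $\w_0$ each monomial $\w_0^{\mathcal{N}_i}$ is a well-defined finite complex number; and since $-\gamma/d - \Ree a_i > 0$ for every $i$, the defining inequalities $\abs{\w_0^{\mathcal{N}_i}} < r\abs{z}^{-\gamma/d -\Ree a_i}$ of $V$ are satisfied whenever $\abs{z}$ is large enough and $\abs{\arg z} < \theta$. Thus $V_{\w_0}$ contains a truncated sector $\{z : \Ree z > R,\ \abs{\arg z} < \theta'\}$ for some $R = R(\w_0)$ and $\theta' > 0$. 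The estimate above yields $\beta_{\w_0}(z) = z + o(\abs{z})$ as $\abs{z} \to \infty$ within this sector, and a Rouché-type surjectivity argument applied to $z \mapsto \beta_{\w_0}(z) - \zeta$ on sufficiently large disks contained in $V_{\w_0}$, together with the univalence of $\beta_{\w_0}$, shows that $\beta_{\w_0}(V_{\w_0})$ contains some right half-plane $\{\Ree z > R'\}$. Choosing $j \in \NN$ with $\Ree z_0 + j > R'$ then gives $(z_0 + j, \w_0) \in W$, equivalently $(z_0,\w_0) \in T_j(W)$, which establishes $\bigcup_{j \in \NN} T_j(W) = \CC \times (\CC^*)^{n-\bar{m}-1} \times \CC^{\bar{m}}$.

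I expect the main obstacle to be the last step: turning the qualitative asymptotic $\beta_{\w_0}(z) - z = o(\abs{z})$ into a genuine surjectivity statement onto a right half-plane, and verifying that the relevant constants are controlled uniformly as $\w_0$ varies. This is where the coupling between the $\w$-dependent geometry of $V$ (whose sectorial slices $V_{\w_0}$ retreat to infinity as $\w_0$ approaches the singular boundary) and the asymptotic behaviour of the Fatou coordinate $\beta_\w$ requires the most delicate bookkeeping.
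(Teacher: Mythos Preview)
Your approach has a genuine gap at the convergence step. You claim that the telescoped series
\[
\beta_{\w}(z) - z \;=\; \sum_{j \geq 0} \tilde{h}\bigl(\tilde{F}^{\circ j}(z,\w)\bigr)
\]
converges, using $\abs{\tilde{h}(z,\w)} < K\abs{z}^{-\gamma/d}$ and $\abs{\tilde{f}^{\circ j}(z,\w)} > \abs{z} + j/2$. But this only gives a general term of order $j^{-\gamma/d}$, and nothing in the setup forces $\gamma/d > 1$: the constant $\gamma$ was chosen in Section~\ref{sect:Para} merely so that $\gamma/d < -\Ree(a_i)$ for every $i$, and if some $\Ree(a_i) \in (-1,0)$ then necessarily $\gamma/d < 1$ and your series diverges. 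So the limit $\lim_{j}\bigl(\tilde{f}^{\circ j}(z,\w) - j\bigr)$ need not exist, and the construction of $\B$ breaks down. This is not the formula that Lemma~\ref{ref:beta} actually provides.

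The paper circumvents this by defining $\beta_j(z) = \tilde{f}_{\w}^{\circ j}(z) - \tilde{f}_{\w}^{\circ j}(p)$ relative to a base point $p \in V_{\w}$; convergence of the ratios $\beta_{j+1}/\beta_j$ then rests on the \emph{derivative} bound $\abs{\partial_z \tilde{h}} < K'\abs{z}^{-1-\gamma/d}$ from Lemma~\ref{lem:Vestimates}, whose exponent $1 + \gamma/d$ is always $> 1$. The price is that $\beta_{\w}$ now depends on the choice of $p$, which cannot be made globally in $\w$ since $V_{\w}$ moves with $\w$. Only $\beta_{\w}'$ is base-point independent, so the locally defined $\beta_U$ differ by holomorphic functions of $\w$ alone; these form an additive Cousin~I cocycle on $(\CC^*)^{n-\bar{m}-1}\times\CC^{\bar{m}}$, and solving it (using that the base is Stein) is what produces a globally holomorphic $\beta$. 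Your argument skips this entire mechanism. The covering step you outline is essentially the content of Lemma~\ref{lem:union} and is fine in spirit, but it is moot until the construction of $\beta$ is repaired.
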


	To prove this result, consider first $\w \in (\CC^*)^{n - \bar{m} -1} \times \CC^{\bar{m}}$. Since $\tilde{F} (z, \w) = (\tilde{f} (z, \w), \w)$, we know that the set $V_{\w} = \{z \in \CC, (z, \w) \in V\}$ is stable under $\tilde{f}_{\w} : z \mapsto \tilde{f}(z, \w)$ and we want to conjugate $\tilde{f}_{\w}$ to $z \mapsto z+1$. 
	
	From the definition of $V$ we have:
	$$
	V_{\w} = \{z \in \CC, \abs{z} > R_{\w}, \abs{\arg z}< \theta\} \quad \text{with} \quad R_{\w} = \max \left\{ \epsilon^{-1}, (r^{-1} \abs{\w}^{\mathcal{N}_i})^{\frac{1}{-\gamma/d - \Ree a_i}}\right\}.$$
	Notice that $V_{\w}$ is star-shaped.
	
	\begin{lem}\label{ref:beta}
		Let $p \in V_{\w}$ be big enough so that for all $z \in V_{\w}$, the euclidean segment $[z, p]$ is in $V_{\w}$.
		
		Then the sequence of functions
		$$
		\beta_j (z) = \tilde{f}_{\w}^{\circ j} (z) - \tilde{f}_{\w}^{\circ j} (p), \: j \in \NN.
		$$
		converges uniformly on $V_{\w}$ to a holomorphic map $\beta_{\w}$ that conjugates $\tilde{f}_{\w}$ to $z \mapsto z+1$.
	\end{lem}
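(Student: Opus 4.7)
The proof will rely on the expansion $\tilde{f}_{\w}(z) = z + 1 + \tilde{h}(z, \w)$ with $|\tilde{h}(z, \w)| \leq K|z|^{-\gamma/d}$ and $|\partial_z \tilde{h}(z, \w)| \leq K'|z|^{-1-\gamma/d}$ from equation \eqref{eq:mh} and Lemma \ref{lem:Vestimates}, together with $|\tilde{f}_{\w}(z)| > |z| + 1/2$ on $V_{\w}$. First I record the orbit growth: iterating the last inequality yields $|\tilde{f}_{\w}^{\circ j}(u)| \geq |u| + j/2$ for all $u \in V_{\w}$ and $j \in \NN$. This linear escape to infinity is what makes the subsequent perturbative estimates summable.

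Next I show that the sequence of derivatives $(\tilde{f}_{\w}^{\circ j})'$ converges locally uniformly on $V_{\w}$. By the chain rule,
$$
(\tilde{f}_{\w}^{\circ j})'(z) = \prod_{k=0}^{j-1} \bigl(1 + \partial_z \tilde{h}(\tilde{f}_{\w}^{\circ k}(z), \w)\bigr),
$$
and Lemma \ref{lem:Vestimates} combined with Step 1 bounds the deviation of each factor from $1$ by $K'(|z| + k/2)^{-1-\gamma/d}$. Since $\gamma/d > 0$, the associated series is summable, uniformly on compact subsets of $V_{\w}$, so the product converges to a holomorphic function $\beta_{\w}'$ on $V_{\w}$.

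Now, since $\beta_j(p) = 0$ by construction and the euclidean segment $[p, z]$ lies in $V_{\w}$ by the hypothesis on $p$, I can write
$$
\beta_j(z) = \int_p^z (\tilde{f}_{\w}^{\circ j})'(u)\, du,
$$
where the integral is taken along this segment. Uniform convergence of the integrand on the compact set $[p, z]$ yields the existence of a holomorphic limit $\beta_{\w}(z) = \int_p^z \beta_{\w}'(u)\, du$, with $\beta_j \to \beta_{\w}$ uniformly on compact subsets of $V_{\w}$. For the conjugation relation, observe that
$$
\beta_j(\tilde{f}_{\w}(z)) = \tilde{f}_{\w}^{\circ (j+1)}(z) - \tilde{f}_{\w}^{\circ j}(p) = \beta_{j+1}(z) + 1 + \tilde{h}(\tilde{f}_{\w}^{\circ j}(p), \w),
$$
and the last term tends to $0$ since $|\tilde{f}_{\w}^{\circ j}(p)| \to \infty$ by Step 1 together with \eqref{eq:mh}. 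Passing to the limit produces $\beta_{\w}(\tilde{f}_{\w}(z)) = \beta_{\w}(z) + 1$, as desired.

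The main obstacle is the summability needed in Step 2, which relies crucially on the sharp exponent $-1-\gamma/d$ supplied by Lemma \ref{lem:Vestimates}; the exponent $-\gamma/d$ from \eqref{eq:mh} alone would not suffice. A more naïve direct route via the telescoping identity $\beta_{j+1}(z) - \beta_j(z) = \tilde{h}(\tilde{f}_{\w}^{\circ j}(z), \w) - \tilde{h}(\tilde{f}_{\w}^{\circ j}(p), \w)$ would require controlling $|\tilde{f}_{\w}^{\circ j}(z) - \tilde{f}_{\w}^{\circ j}(p)|$ and ensuring the segment between these iterates stays in $V_{\w}$, thereby coupling the convergence problem with the estimate one is trying to prove. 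Working with the derivatives avoids this coupling, and needs only that the initial segment $[p, z]$ lies in $V_{\w}$, which is exactly the hypothesis imposed on $p$.
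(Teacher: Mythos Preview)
Your argument is correct and takes a genuinely different route from the paper. The paper controls the ratios $\beta_{j+1}/\beta_j$: applying the mean value inequality to $\tilde{h}_\w$ along the moving segment $[\tilde{f}_\w^{\circ j}(z), \tilde{f}_\w^{\circ j}(p)]$, together with the derivative bound from Lemma~\ref{lem:Vestimates}, yields $|\beta_{j+1}/\beta_j - 1| \leq C j^{-1-\gamma/d}$ uniformly in $z$, so the infinite product converges. This requires checking that those moving segments remain in $V_\w$, which the paper handles via the sectorial geometry through the bound $\min_{[z_1,z_2]} |z| \geq \cos\theta \min(|z_1|, |z_2|)$. Your chain-rule product for $(\tilde{f}_\w^{\circ j})'$ followed by integration along the fixed segment $[p,z]$ sidesteps this entirely; the hypothesis on $p$ is invoked exactly once, to justify that integration path. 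Ironically, the ``na\"ive direct route'' you warn against in your closing paragraph is essentially what the paper does (in multiplicative rather than additive form), and the segment-tracking you flag as an obstacle is precisely what the $\cos\theta$ bound is there to resolve. Both approaches ultimately deliver only locally uniform convergence of $\beta_j$ (the paper's phrasing notwithstanding), which is all that is needed.

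One omission: you do not verify that $\beta_\w$ is injective, which is required for it to be a genuine conjugation and for its later use in Proposition~\ref{prop:Main}. The paper notes this at the end: each $\beta_j$ is a translate of the injective map $\tilde{f}_\w^{\circ j}$, hence injective, and by Hurwitz the locally uniform limit is injective or constant; your product formula for $\beta_\w'$ (a convergent infinite product of nonvanishing factors) shows the limit is nonconstant.
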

	
	\begin{proof}
		Since $\abs{\tilde{f}_{\w}^{\circ j} (p)} > \abs{p}$ for all $j \in \NN$, $\tilde{f}_{\w}^{\circ j} (p)$ also satisfies that for all $z \in V_{\w}$, the euclidean segment $[z, \tilde{f}_{\w}^{\circ j} (p)]$ is in $V_{\w}$.
		
		Moreover as $\tilde{f}_{\w} (z) = z + 1 + \tilde{h} (z, \w)$ and $\abs{\frac{\partial \tilde{h}}{\partial z} (z, \w)} < K' \abs{z}^{-1-\gamma/d}$, we get from the mean value inequality:
		$$
		\abs{\frac{\tilde{f}_{\w} (z_1) - \tilde{f}_{\w} (z_2)}{z_1 - z_2} - 1} = \abs{\frac{\tilde{h}_{\w} (z_1) - \tilde{h}_{\w} (z_2)}{z_1 - z_2}} < \max_{z \in [z_1, z_2]} \frac{K'}{\abs{z}^{1 + \gamma/d}}.
		$$
		
		However we also know that since the opening of $V_{\w}$ is $2 \theta$, we have for all $z_1, z_2 \in V_{\w}$:
		$$
		\min_{z \in [z_1, z_2]} \abs{z} \geq \cos \theta \min \{\abs{z_1}, \abs{z_2}\}
		$$
		hence:
		$$
		\abs{\frac{\tilde{f}_{\w} (z_1) - \tilde{f}_{\w} (z_2)}{z_1 - z_2} - 1} \geq \frac{K'}{(\cos \theta \min \{\abs{z_1}, \abs{z_2}\})^{1 + \gamma/d}}.
		$$
		
		When we apply this formula to $z_1 = \tilde{f}_{\w} (z)$ and $z_2 = \tilde{f}_{\w} (p)$, we obtain:
		$$
		\abs{\frac{\beta_{j+1} (z)}{\beta_j (z)} - 1} = \abs{\frac{\tilde{f}_\w^{\circ j+1} (z) - \tilde{f}_\w^{\circ j+1} (p)}{\tilde{f}_\w^{\circ j} (z) - \tilde{f}_\w^{\circ j} (p)} - 1} \leq \frac{K'}{(\cos \theta \min \{\abs{\tilde{f}_\w^{\circ j} (z)}, \abs{\tilde{f}_\w^{\circ j} (p)}\})^{1 + \gamma/d}}
		$$
		for all $z \in V_{\w}$ and $j \in \NN$.
		
		As we have $\abs{\tilde{f}_{\w}^{\circ j} (z)} > j/2$ and $\abs{\tilde{f}_{\w}^{\circ j} (p)} > j/2$, it also holds that:
		$$
		\abs{\frac{\beta_{j+1} (z)}{\beta_j (z)} - 1} > K' (\frac{2}{\cos \theta})^{1+ \gamma/d} j^{-1 - \gamma/d}
		$$
		for all $z \in V_{\w}$ and $j \in \NN$. The product $\prod_{j \geq 0} \frac{\beta_{j+1} (z)}{\beta_j (z)}$ is therefore uniformly convergent in $V_{\w}$ and $(\beta_j)_j$ converges to a holomorphic function $\beta_{\w} \in \mathcal{O} (V_{\w})$.
		
		Since $\tilde{f}_\w (z) = z + 1 + \tilde{h}(z, \w)$, we can use equation \ref{eq:mh} to see that
		$$
		\abs{\tilde{f}_\w^{\circ j+1} (p) - \tilde{f}_\w^{\circ j} (p) - 1} = \abs{\tilde{h} (\tilde{f}_\w^{\circ j} (p), \w)} \geq \frac{K}{\abs{\tilde{f}_\w^{\circ j} (p)}^{\gamma/d}} \to_{j \to \infty} 0,
		$$
		so $\beta_j (\tilde{f}_\w (z)) = \beta_{j+1} (z) + \tilde{f}_\w^{\circ j+1} (p) - \tilde{f}_\w^{\circ j} (p)$ and when $j$ goes to $+\infty$ we get $\beta_\w (\tilde{f}_\w (z)) = \beta_\w (z) + 1$.
		
		Finally as $\beta_j$ is injective for all $j$ and the limit is not a constant, we see that $\beta_\w$ is injective.
	\end{proof}
	
	\begin{definition}
		Define $t_j : \CC \to \CC$ by $t_j (z) = z-j$ and $T_j : \CC^n \to \CC^n$ by $T_j (z_1, \ldots, z_n) = (z_1 - j, z_2, \ldots, z_n)$.
	\end{definition}
	
	\begin{lem}\label{lem:union}
		For all $\w \in \CC^{n-1}$, we have:
		$$
		\bigcup_{j \in \NN} t_j \left( \beta_\w (V_\w) \right) = \CC.
		$$ 
	\end{lem}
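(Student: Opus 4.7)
My plan is to show that for every $w_0 \in \CC$ there exists $j \in \NN$ with $w_0 + j \in \beta_\w(V_\w)$, equivalently $w_0 \in t_j(\beta_\w(V_\w))$. The idea is to solve $\beta_\w(\zeta) = w_0 + j$ by means of Rouché's theorem on a unit disk centered at $q_j + w_0$, where $q_j := \tilde{f}_\w^{\circ j}(p)$ is a forward iterate of the base point $p$ from Lemma~\ref{ref:beta}. Since $\beta_\w(p) = \lim_j \beta_j(p) = 0$, one has $\beta_\w(q_j) = j$, so the candidate $\zeta \approx q_j + w_0$ should indeed satisfy $\beta_\w(\zeta) \approx w_0 + j$ as soon as $\beta_\w$ is sufficiently close to a translation.

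The main technical step will be to upgrade Lemma~\ref{ref:beta} into a uniform asymptotic estimate
$$
\abs{[\beta_\w(\zeta_1) - \beta_\w(\zeta_2)] - [\zeta_1 - \zeta_2]} \leq C\, \abs{\zeta_1 - \zeta_2}\cdot m^{-\gamma/d}, \qquad m := \min\{\abs{\zeta_1}, \abs{\zeta_2}\},
$$
valid for $\zeta_1, \zeta_2 \in V_\w$ with $m$ large enough, where $C>0$ is a uniform constant. Setting $\Delta_j := \tilde{f}_\w^{\circ j}(\zeta_1) - \tilde{f}_\w^{\circ j}(\zeta_2)$, I would compute $\Delta_{j+1} - \Delta_j = \tilde{h}(\tilde{f}_\w^{\circ j}(\zeta_1), \w) - \tilde{h}(\tilde{f}_\w^{\circ j}(\zeta_2), \w)$. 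Applying the mean-value inequality along the segment between $\tilde{f}_\w^{\circ j}(\zeta_1)$ and $\tilde{f}_\w^{\circ j}(\zeta_2)$ (which lies in $V_\w$), combined with the derivative estimate $\abs{\tfrac{\partial \tilde{h}}{\partial z}(z, \w)} < K' \abs{z}^{-1-\gamma/d}$ from Lemma~\ref{lem:Vestimates} and the iterated bound $\abs{\tilde{f}_\w^{\circ j}(\zeta_i)} \geq \abs{\zeta_i} + j/2$ (a consequence of $\abs{\tilde{f}_\w(z)} > \abs{z} + 1/2$), yields $\abs{\Delta_{j+1} - \Delta_j} \leq K'' \abs{\Delta_j}(m + j/2)^{-1-\gamma/d}$. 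A standard product argument keeps $\abs{\Delta_j}$ bounded by $2\abs{\zeta_1 - \zeta_2}$, and summing the increments via $\sum_{k\geq 0}(m + k/2)^{-1-\gamma/d} = O(m^{-\gamma/d})$ delivers the displayed estimate.

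To conclude, fix $w_0 \in \CC$. For $j$ large, $q_j$ has arbitrarily large modulus and argument tending to $0$, so $\overline{D(q_j + w_0, 1)} \subset V_\w$. Applying the estimate above to an arbitrary $\zeta$ on the bounding circle and to $\zeta_2 = q_j$ gives
$$
\abs{[\beta_\w(\zeta) - (w_0 + j)] - [\zeta - (q_j + w_0)]} \leq C(1 + \abs{w_0})(\abs{q_j} - 1 - \abs{w_0})^{-\gamma/d} \longrightarrow 0,
$$
which is strictly less than $1 = \abs{\zeta - (q_j + w_0)}$ on $\partial D(q_j + w_0, 1)$ once $j$ is large enough. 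Rouché's theorem applied to $\zeta \mapsto \beta_\w(\zeta) - (w_0 + j)$ and $\zeta \mapsto \zeta - (q_j + w_0)$ on $D(q_j + w_0, 1)$ then yields a unique zero $\zeta^* \in V_\w$ of the former, so $w_0 + j \in \beta_\w(V_\w)$ and $w_0 \in t_j(\beta_\w(V_\w))$, finishing the proof. The main obstacle is the quantitative refinement of the convergence in Lemma~\ref{ref:beta}; once that is established, the Rouché step is routine.
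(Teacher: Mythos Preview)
Your argument is correct, and its overall shape---find $\zeta$ with $\beta_\w(\zeta)=w_0+j$ via Rouch\'e---matches the paper's, but the implementation differs. The paper applies Rouch\'e on disks $D(\zeta+j,\rho\abs{\zeta+j})$ of \emph{growing} radius, using only the crude asymptotic $\beta_\w(z)/z\to 1$; this asymptotic follows immediately from the uniform convergence in Lemma~\ref{ref:beta} (pick $l$ with $\abs{\beta_\w-\beta_l}$ bounded, then use $\tilde f_\w^{\circ l}(z)/z\to 1$), so no further estimate is needed. You instead work on unit disks centered at $q_j+w_0$ and compensate for the fixed radius by establishing the sharper Lipschitz-type bound $\abs{[\beta_\w(\zeta_1)-\beta_\w(\zeta_2)]-[\zeta_1-\zeta_2]}\leq C\abs{\zeta_1-\zeta_2}\,m^{-\gamma/d}$; this is essentially a quantitative re-run of the product argument in Lemma~\ref{ref:beta} for a general pair $(\zeta_1,\zeta_2)$ rather than $(z,p)$. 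Your approach yields finer control over $\beta_\w$ (closeness to a translation, not merely to the identity at first order), at the cost of redoing an estimate the paper already has in weaker form; the paper's route is shorter and uses only what was already proved. One small point worth making explicit in your write-up: the segments $[\tilde f_\w^{\circ k}(\zeta_1),\tilde f_\w^{\circ k}(\zeta_2)]$ lie in $V_\w$ because the angular sector $\{\abs{\arg z}<\theta\}$ is convex and $\min_{[\zeta_1,\zeta_2]}\abs{z}\geq\cos\theta\min\{\abs{\zeta_1},\abs{\zeta_2}\}$, so the modulus stays above $R_\w$ once $m$ is large---this is the same geometric fact the paper invokes in Lemma~\ref{ref:beta}.
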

	
	\begin{proof}
		Let us first show that $\lim_{z \to \infty} \frac{\beta_\w (z)}{z} = 1$. Since $\beta_j$ converges uniformly to $\beta_\w$ when $j$ goes to infinity, there exists $l \in \NN$ such that $\abs{\beta_\w - \beta_l}$ is bounded, thus
		$$
		\abs{\beta_\w - \tilde{f}_\w^{\circ l}} \leq \abs{\beta_\w - \beta_l} + \abs{\beta_l -  \tilde{f}_\w^{\circ l}}
		$$
		is also bounded.
		
		As $\tilde{f}_\w (z) = z + 1 +\tilde{h} (z, \w)$ and $\abs{\tilde{h} (z, \w)} < K \abs{z}^{- \gamma/d}$, we have:
		$$
		\lim_{z \to \infty} \frac{\beta_\w (z)}{z} = \lim_{z \to \infty} \frac{\tilde{f}_\w^{\circ l} (z)}{z} = 1.
		$$
		
		Let $\zeta \in \CC$. In order to prove the lemma, we will show that $\zeta_j = \zeta + j$ is in $\beta_\w (V_\w)$ for $j$ big enough. Let $\rho < \sin \theta$, for $j$ big enough the disc $D_j$ of center $\zeta_j$ and of radius $r_j = \rho \abs{\zeta_j}$ is contained in $V_\w$.
		
		Using Rouché's theorem, if we have that for all $z \in \partial D_j$,
		$$
		\abs{\beta_\w (z) - z} < r_j = \abs{z - \zeta_j},
		$$
		then there exists a unique $z \in D_j$ such that $\beta_\w (z) = \zeta_j$ meaning that $\zeta_j \in \beta_\w (V_\w)$.
		
		Moreover as $\lim_{z \to \infty} \frac{\beta_\w (z)}{z} = 1$, for all $\tau > 0$ we have that for $z$ big enough, $\abs{\beta_\w (z) - z} < \tau \abs{z}$.
		
		Let us pick $\tau$ such that $\tau (1 + \rho) < \rho$, then
		$$
		\abs{\beta_\w (z) - z} < \tau \abs{z} \leq \tau (\abs{\zeta_j} + r_j) \leq \tau (1 + \rho) \abs{\zeta_j} \leq \rho \abs{\zeta_j} = r_j
		$$
		which concludes the proof.
		
	\end{proof}
	
	We are ready to prove the main result of this section:
	
	\begin{proof}[{Proof of Proposition \ref{prop:Main}}]
		The map $\beta_\w$ depends on the base point $p \in V_\w$, but its derivative does not:
		$$
		\beta_\w'(z) = \lim_{j \to \infty} (\tilde{f}_\w^{\circ j}) '(z).
		$$
		
		If $\w'$ is close to $\w$, then we can pick the same base point $p$ for $\beta_\w$ and $\beta_{\w'}$ and then $\beta_\w$ depends holomorphically on $\w$. We can thus find an open covering $\mathcal{U}$ of $(\CC^*)^{n-\bar{m}-1} \times \CC^{\bar{m}}$ and for all $U \in \mathcal{U}$, a holomorphic map $\beta_U (z, \w)$ for $\w \in U$ and $z \in V_\w$ such that $z \mapsto \beta_U (z, \w)$ is bijective for all $\w$ and $\beta_U (\tilde{f} (z, \w), \w) = \beta_U (z, \w) + 1$.
		
		Moreover, the derivative does not depend on $U \in \mathcal{U}$, as for all $\w \in U_1 \cap U_2$ and $z \in V_\w$ we have:
		$$
		\frac{\partial \beta_{U_1}}{\partial z} = \frac{\partial \beta_{U_2}}{\partial z}.
		$$
		Therefore if $U_1 \cap U_2 \neq \emptyset$, there exists $g_{U_1 U_2} \in \mathcal{O} (U_1 \cap U_2)$ such that
		$$
		\beta_{U_2} (z, \w) - \beta_{U_1} (z, \w) = g_{U_1 U_2} (\w)
		$$
		for $\w \in U_1 \cap U_2$ and $z \in V_\w$. These functions satisfy $g_{U_1 U_2} + g_{U_2 U_3} + g_{U_3 U_1} = 0$ on $U_1 \cap U_2 \cap U_3$ for all $U_1, U_2, U_3 \in \mathcal{U}$.
		
		Since the first Cousin problem can be solved on Stein manifolds \cite{Forstneric},
		there exists $g_U \in \mathcal{O} (U)$ such that $g_{U_1 U_2} = g_{U_1} - g_{U_2}$ on $W_{U_1} \cap W_{U_2}$, and moreover:
		$$
		\beta_{U_2} (z, \w) + g_{U_2} (z, \w) = \beta_{U_1} (z, \w) + g_{U_1} (z, \w)
		$$
		for $\w \in U_1 \cap U_2$ and $z \in V_\w$.
		
		We can thus define a global function $\beta \in \mathcal{O} (V)$ by $\beta (z, \w) = \beta_U (z, \w) + g_U (\w)$ on $U$, and for all $\w \in (\CC^*)^{n-\bar{m}-1} \times \CC^{\bar{m}}$, $z \mapsto \beta (z, \w)$ is univalent and $\beta (\tilde{f} (z, \w), \w) = \beta (z, \w) + 1$.
		
		Then it is clear that the map $\B (z, \w) = (\beta(z,\w), \w)$ is bijective and that $\B \circ \tilde{F} (z, \w) = \B (z, \w) + (1, 0, \ldots, 0)$ for every $(z, \w) \in V$.
		
		Let us now prove that $W = \B (V)$ satisfies:
		$$
		\bigcup_{j \in \NN} T_j (W) = \CC \times (\CC^*)^{n-\bar{m}-1} \times \CC^{\bar{m}}.
		$$
		Consider a point $(z_0, \w_0) \in \CC \times (\CC^*)^{n-\bar{m}-1} \times \CC^{\bar{m}}$, if $\w_0 \in W_U$ we have $\beta (z_0, \w_0) = \beta_U (z_0, \w_0) + g_U (\w_0) = \beta_{\w_0} (z_0) + g_U (\w_0)$ for all $z \in V_{\w_0}$. By lemma \ref{lem:union} there exists $z \in V_{\w_0}$ and $j \in \NN$ such that $z_0 - g_U (\w_0) + j = \beta_{\w_0} (z)$ and then
		$$
		\B (z, \w_0) = (\beta (z, \w_0), \w_0) = (z_0 + j, \w_0)
		$$
		so $(z_0, \w_0) \in T_j (W)$.
	\end{proof}
	
	\section{The flower theorem}\label{sect:Proof}
	
	In this section we complete the proof of theorem \ref{thm:conv}. We have already built open sets $(U_\ell)_{\ell \in \{1, \ldots, n\}}$ and biholomorphisms $\B \circ \Phi_\ell : U_\ell \to W$ that conjugate $f$ to $(z, \w) \mapsto (z+1, \w)$ when $f$ satisfies $\Ree \left(\frac{a_i}{\aM} \right) > 0$ for all $i \in \{1, \ldots, n\}$ .
	
	We can do the same with $f^{-1}$, however the reunion of these open sets with the fixed point set do not cover a neighbourhood of the origin. We thus need to enlarge them while preserving the properties of convergence and conjugation.
	
	Let $\epsilon, \delta, r > 0$ and $\theta \in ]0, \pi/2[$ such that everything holds.
	
	Let us consider the domain $\tilde{S}_{-1} (\epsilon, \theta)$ introduced in definition \ref{def:S}, and let $\tilde{S}_\ell$, for $\ell \in \{0, \ldots, d-1\}$ be its connected components bisected by $e^{\frac{2 i \pi \ell}{d}} \RR^+$. It is a sectorial domain of opening $\frac{\pi + 2 \theta}{d}$.
	
	Let $0 < \delta' \leq \delta$, for $\ell \in \{0, \ldots, d-1\}$ we define
	$$
	\tilde{D}_\ell (\delta') = \{\x \in \CC^{n}, \x^m \in \tilde{S}_\ell, \abs{x_i} < \delta' \forall i\}.
	$$
	It is connected as $\gcd (m) = 1$.
	
	\begin{lem}\label{lem:D}
		If $\delta'$ is small enough, for all $p \in \tilde{D}_\ell (\delta')$, there exists $j \in \NN$ such that $f^{\circ j} (p) \in \Phi_\ell^{-1} (V)$.
	\end{lem}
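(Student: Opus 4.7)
The plan is to follow the orbit $f^{\circ j}(p)$ through three stages: exit the petal of $\tilde{S}_\ell$ via the one-dimensional Leau-Fatou theorem, enter $U_\ell$ thanks to Remark~\ref{rem:D} and Proposition~\ref{prop:inU}, then land in $V$ because $z_j \to \infty$ while $\w$ remains bounded.

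More precisely, Theorem~\ref{thm:LF} applied to $y_j = f^{\circ j}(p)^m$ (using equation~\eqref{eq:xm}) ensures that $(y_j)_j$ remains in $\tilde{S}_\ell$, satisfies $j|y_j|^d \to 1$, and that there exists $j_1 \leq C/|p^m|^d$ with $y_j \in S_\ell$ for all $j \geq j_1$. During the petal transit, I bound the other coordinates via $|f_i(\x)| \leq |x_i|(1 + K|\x^M|)$, with $K$ uniform on $\tilde{D}_\ell(\delta')$, giving
\[
|f^{\circ j_1}_i(p)| \leq |p_i|\prod_{l=0}^{j_1-1}(1+K|y_l|^d).
\]
By Leau-Fatou, $|y_l|^d \leq c|p^m|^d/(1+l|p^m|^d)$, so $\sum_{l=0}^{j_1-1}|y_l|^d \leq c\log(1+j_1|p^m|^d) = O(1)$, and hence the product above is bounded by a constant $C_1$ independent of $p$. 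Choosing $\delta'$ small enough that $C_1 \delta' < \delta$, this yields $|f^{\circ j_1}_i(p)| < \delta$ for all $i$, i.e.\ $f^{\circ j_1}(p) \in D(\epsilon,\theta,\delta)$.

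Remark~\ref{rem:D} then gives $f^{\circ j}(p) \to 0$, and Proposition~\ref{prop:inU} provides $j_2 \geq j_1$ with $f^{\circ j}(p) \in U$ for all $j \geq j_2$; since $y_j$ never leaves $\tilde{S}_\ell$, the relevant component of $U$ is precisely $U_\ell$. For such $j$, $\Phi_\ell(f^{\circ j}(p)) = (z_j, \w)$ where $\w = (\psi_2(f^{\circ j}(p)), \ldots, \psi_n(f^{\circ j}(p)))$ is constant by invariance of the $\psi_i$, while $|z_j| \to \infty$ and $|\arg z_j| < \theta$. Since $-\gamma/d - \Ree a_i > 0$ for every $i$, the inequalities $|\w^{\mathcal{N}_i}| < r|z_j|^{-\gamma/d-\Ree a_i}$ defining $V$ are satisfied for $j$ large enough, giving $f^{\circ j}(p) \in \Phi_\ell^{-1}(V)$.

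The technical heart is the petal-transit estimate: although $j_1$ may be arbitrarily large as $|p^m| \to 0$ and the argument of $y_l^d$ can sweep through angles where one would naively expect exponential blow-up of $|x_i|$, Leau-Fatou's decay $|y_l|^d \lesssim 1/(l+1/|p^m|^d)$ makes $\sum_{l=0}^{j_1}|y_l|^d$ merely $O(1)$, with a constant independent of $p$. This forces at most a uniform multiplicative growth of $|f^{\circ j}_i(p)|/|p_i|$, which is absorbed by the freedom to choose $\delta'$ small enough a posteriori.
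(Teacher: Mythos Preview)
Your proof is correct and follows essentially the same three-stage strategy as the paper: use Leau--Fatou to steer the orbit into $D_\ell(\epsilon,\theta,\delta)$ while controlling the other coordinates during the transit, then invoke Remark~\ref{rem:D} and Proposition~\ref{prop:inU}/Remark~\ref{rem:V} to reach $\Phi_\ell^{-1}(V)$. The only cosmetic difference is the transit estimate: the paper uses the cruder bound $|y_l|^d \leq c|\x^m|^d$ and observes that $(1+\rho c t)^{C/t+1}$ is uniformly bounded in $t$, whereas you use the sharper Leau--Fatou decay $|y_l|^d \leq c|p^m|^d/(1+l|p^m|^d)$ and a logarithmic sum; both yield the same uniform constant absorbed by the choice of $\delta'$.
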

	
	\begin{proof}
		Without loss of generality we can assume $\ell = 0$.
		
		We claim that for any $\x \in \tilde{D}_0 (\delta')$, there exists $j \in \NN$ such that $f^{\circ j} (\x) \in D_0 (\epsilon, \theta, \delta)$. Then this claim together with remarks \ref{rem:D} and \ref{rem:V} prove the lemma.
		
		We use the estimates of Leau-Fatou theorem on equation \eqref{eq:xm} to get two constants $c \geq 1$ and $C>0$ depending only on $\epsilon$ and $\theta$ such that if $\x \in \CC^n$ satisfies $\x^m \in \tilde{S}_0 (\epsilon, \theta)$ and $\abs{f^{\circ j}_i (\x)} < \delta$ for all $i \in \{1, \ldots, n\}$ and $j \in \NN$, then we have that $f^{\circ j} (\x)^m \in \tilde{S}_0$ for all $j$, $\abs{f^{\circ j} (\x)^m}^d \leq c \abs{\x^m}^d$ for all $j$, and $\abs{\arg (f^{\circ j} (\x)^m)} < \theta/d$ for $j \geq \frac{C}{\abs{\x^m}^d}$.
		
		We can find $\rho > 0$ such that for all $i$,
		$$
		\abs{f_i (\x)} = \abs{x_i (1 + a_i \x^m + o (\x^m))} \leq \abs{x_i} (1 + \rho \abs{\x^m}).
		$$
		Then set $K = \sup_{t \in ]0, \epsilon[} (1 + \rho c t^d)^{\frac{C}{t^d + 1}}$ and $\delta' < \frac{\delta}{K}$. Let $\x \in \tilde{D}_0 (\delta')$ and $j_0 = \lceil \frac{C}{\abs{\x^m}^d} \rceil$. Then for all $j \geq j_0$, we have:
		$$
		\abs{f^{\circ j}_i (\x)} \leq \abs{x_i} \prod_{l=0}^{j-1} (1 + \rho \abs{f^{\circ l} (\x)^m}^d) \leq \abs{x_i} (1 + \rho c \abs{\x^m}^d)^{j_0}
		$$
		so $\abs{f^{\circ j}_i (\x)} \leq \delta' K < \delta$ for all $j \leq j_0$.
		
		As we also have that $\abs{\arg (f^{\circ j} (\x)^m)} < \theta/d$ for $j \geq j_0$, it follows that $f^{\circ j_0} (\x) \in D_0 (\epsilon, \theta, \delta)$.
		
	\end{proof}
	
	\begin{proof}[End of the proof of theorem \ref{thm:conv}]
		
		Let $\delta' > 0$ such that lemma \ref{lem:D} holds. For each $\ell \in \{0, \ldots, d-1\}$ we define
		$$
		\Omega_\ell^+ = \bigcup_{j \geq 0} f^{\circ j} (\tilde{D}_\ell (\delta')).
		$$
		Then $\Omega_\ell^+$ is invariant by $f$, is connected and is attracted to $U_\ell$ by lemma \ref{lem:D}.
		
		We have $\tilde{D}_\ell (\delta') \subset \Omega_\ell^+$ and from the proof of lemma \ref{lem:D} we also get $\Omega_\ell^+ \subset \tilde{D}_\ell (K \delta')$.
		
		As $f^{-1}$ can be written as:
		$$
		f^{-1}_i (\x) = x_i (1 + \x^m (-a_i + o(1)))
		$$
		we can apply the same methods to $f^{-1}$ instead of $f$ and build connected open domains $\Omega_0^-, \ldots, \Omega_{d-1}^-$ defined by
		$$
		\Omega_\ell^- = \bigcup_{j \geq 0} f^{\circ - j} (\tilde{D}_\ell^- (\delta'))
		$$
		with
		$\tilde{D}_\ell^- (\delta') = \{\x \in \CC^{n}, \x^m \in \tilde{S}_\ell^-, \abs{x_i} < \delta' \: \forall i\}$ where $\tilde{S}_\ell^-$ are the connected components of $\tilde{S}_1$ as introduced in definition \ref{def:S}. Then in each $\Omega_\ell^-$, we have $f^{\circ - j} \to 0$ and $\tilde{D}_\ell^- (\delta') \subset \Omega_\ell^- \subset \tilde{D}_\ell^- (K \delta')$.
		
		Since the opening of the connected components of $\tilde{S}_{-1} (\epsilon, \theta)$ and of $\tilde{S}_1 (\epsilon, \theta)$ is bigger than $\pi/d$, we have that the domains $\Omega_0^+, \ldots, \Omega_{d-1}^+, \Omega_0^-, \ldots, \Omega_{d-1}^-$ together with the fixed point set $\{\x^m = 0\}$ cover a neighbourhood of the origin.
		
		For each $\ell$, the map $\phi_\ell^+ = \B \circ \Phi_\ell : \Phi_\ell^{-1} (V) \to \CC^n$ is a biholomorphism that conjugates $f$ with $(z, \w) \mapsto (z+1, \w)$. It can naturally be extended to $\Omega_\ell^+$ as for every $\x \in \Omega_\ell^+$, there exists $j \in \NN$ such that $f^{\circ j} (\x) \in \Phi_\ell^{-1} (V)$. We then define $\phi_\ell^+ (\x) = \phi_\ell^+ (f^{\circ j} (\x)) - (j, 0, \ldots, 0)$.
		
		Moreover we can see that $\phi_\ell^+ (\Omega_\ell^+) \subset \CC \times (\CC^*)^{n-\bar{m}-1} \times \CC^{\bar{m}}$.
		
	\end{proof}

	\section{Proof of theorem \ref{thm:div}}\label{sect:div}
	
	Consider $f : (\CC^n, 0) \to (\CC^n, 0)$ of the form \eqref{eq:f}, after the change of coordinates done in the first section we can assume $\aM = -1$. Then the condition in theorem \ref{thm:div} translates into $\Ree (a_i) > 0$ for some $i \in \{1, \ldots, n\}$.
	
	Given $\delta > 0$, we define $U$ as the polydisc:
	$$
	U (\delta) = \{ \x \in \CC^n : \abs{x_i} < \delta \: \forall i\}.
	$$
	
	\begin{prop}
		For all $\x \in U$ outside of the fixed point set $\x^M = 0$, there exists $j > 0$ such that $f^{\circ j} (\x) \notin U$, and there exists $j' > 0$ such that  $f^{\circ -j'} (\x) \notin U$.
	\end{prop}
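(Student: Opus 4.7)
The plan is to fix an index $i_0$ with $\Ree(a_{i_0}) > 0$ (such $i_0$ exists because, after the normalization $\aM = -1$ from Section \ref{sect:Pre}, the hypothesis $\Ree(a_{i_0}/\aM) < 0$ becomes $\Ree(a_{i_0}) > 0$) and to argue that, unless the forward orbit of $\x$ under $f$ leaves $U(\delta)$, the quantity $\abs{x_{i_0}^{(j)}}$ must grow without bound, contradicting $\abs{x_{i_0}^{(j)}} < \delta$. Writing $\x_j = f^{\circ j}(\x)$ and taking logarithms in the $i_0$-th coordinate of \eqref{eq:f} yields the key per-step increment
$$
\log\abs{x_{i_0}^{(j+1)}} - \log\abs{x_{i_0}^{(j)}} = \Ree\!\bigl(a_{i_0}\x_j^M\bigr) + O\!\bigl(\abs{\x_j^M}^2\bigr).
$$

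I would then split on the behavior of the auxiliary one-dimensional orbit $y_j = \x_j^m$, which by \eqref{eq:xm} satisfies $y_{j+1} = y_j(1 - y_j^d/d + o(y_j^d))$. By Leau-Fatou's flower theorem (Theorem \ref{thm:LF}), either the orbit $(y_j)$ never enters an attracting petal, in which case $\abs{y_j}$ eventually exceeds $\delta^{\abs{m}}$; since $\abs{y_j} = \prod_i \abs{x_i^{(j)}}^{m_i} \leq \delta^{\abs{m}}$ so long as $\x_j \in U(\delta)$, the orbit must have already escaped $U$ at that step. Otherwise, $y_j$ enters an attracting petal in finite time, so $\abs{y_j^d} \sim c/j$ and $\arg(y_j^d) \to 0$. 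In that case $\arg(\x_j^M) = \arg(y_j^d) \to 0$ gives $\Ree(a_{i_0}\x_j^M) = (1+o(1))\Ree(a_{i_0})\abs{\x_j^M}$; summing over $j$, the divergence of $\sum 1/j$ forces $\log\abs{x_{i_0}^{(N)}} \to +\infty$, while the convergence of $\sum 1/j^2$ absorbs the quadratic error — yielding the desired exit from $U(\delta)$.

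For the backward iterate, I would observe that $f^{-1}$ is again of the form \eqref{eq:f}, with leading coefficients $-a_i + o(1)$ and $\scal{-a, M} = 1$, so the hypothesis transfers to $f^{-1}$ with the same index $i_0$ via $\Ree(-a_{i_0}/1) = -\Ree(a_{i_0}) < 0$; rerunning the previous argument for $f^{-1}$ supplies the required $j'$. The main obstacle I anticipate is the rigorous derivation of $\sum_j \Ree(a_{i_0}\x_j^M) \to +\infty$: it requires that once $y_j$ has entered its attracting petal, $\arg(y_j^d)$ remains confined to a cone where $\Ree(a_{i_0}\x_j^M)$ keeps a definite positive lower bound proportional to $\abs{\x_j^M}$. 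This should follow from the petal geometry together with the quantitative estimate $\abs{y_j^d} \leq c\abs{y_0^d}/(1 + j\abs{y_0^d})$ in Theorem \ref{thm:LF}, so that the logarithmic divergence of the main term is not absorbed by cumulative errors.
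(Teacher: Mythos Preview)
Your approach is essentially the paper's: split on whether the auxiliary one-dimensional orbit $\x_j^M$ converges to $0$ via Leau--Fatou, and in the convergent case use $j\,\x_j^M \to 1$ and $\arg(\x_j^M)\to 0$ to force $\abs{x_{i_0}^{(j)}}\to\infty$ through the divergence of $\sum_j\abs{\x_j^M}$. The paper writes the growth as a divergent product $\prod(1+\nu\abs{\x_j^M})$ rather than your logarithmic sum, and treats $f^{-1}$ identically; these are the same argument.

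One correction is needed in your error bookkeeping. Expanding $\log\bigl(1+\x_j^M(a_{i_0}+A_{i_0}(\x_j))\bigr)$ produces, besides $\Ree(a_{i_0}\x_j^M)$ and a genuinely quadratic $O(\abs{\x_j^M}^2)$ tail, the linear term $\Ree\bigl(A_{i_0}(\x_j)\,\x_j^M\bigr)$, which is only $O(\abs{\x_j}\cdot\abs{\x_j^M})$. Since all you know along the orbit is $\abs{\x_j}<\delta$ (not $\abs{\x_j}=O(\abs{\x_j^M})$), this contributes $O(\delta/j)$, and your plan to absorb the error via the convergent series $\sum 1/j^2$ does not apply to it. The fix is exactly what the paper does implicitly: take $\delta$ small enough that $\abs{A_{i_0}}\le C\delta$ on $U(\delta)$ is dominated by $\Ree(a_{i_0})$; then once $\arg(\x_j^M)$ is small the full increment is bounded below by $\nu\abs{\x_j^M}$ for some fixed $\nu>0$, and the divergence of $\sum_j\abs{\x_j^M}$ alone gives $\log\abs{x_{i_0}^{(j)}}\to+\infty$.
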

	
	\begin{proof}
		We only prove the first assertion, as the second one follows from studying $f^{-1}$ in the same way.
		
		Let $\x$ be a point in $U$ such that $\x^M \neq 0$, then by Leau-Fatou theorem applied to $\x^M$, either the orbit of $\x$ escapes $U$ or $f^{\circ j}(\x)^M$ tends to zero. In the second case, we then also know that it will satisfy :
		$$
		j f^{\circ j}(\x)^M \to 1
		$$
		which implies that the sum $\sum_{j \geq 0} \abs{f^{\circ j}(\x)^M}$ diverges and that $\abs{\arg (f^{\circ j} (\x)^M))} \to 0$.
		
		Moreover, there exists $j_0 \geq 0$ and $\nu >0$ such that for all $j \geq j_0$,
		\begin{align*}
			\abs{f^{\circ j+1}_i(\x)} &= \abs{f^{\circ j}_i(\x)} \abs{1 + a_i \x^M + o (\x^M)}\\
			&\geq \abs{f^{\circ j}_i(\x)} \left( 1+ \nu \abs{\x^M} \right)
		\end{align*}
		as $\Ree (a_i) > 0$.
		
		So
		$$
		\abs{f^{\circ j}_i(\x)} \geq \abs{f^{\circ j_0}_i(\x)} \prod_{k = j_0}^{j-1} \left(1 + \nu \abs{f^{\circ k}(\x)^M} \right)
		$$	
		and the product tends to $\infty$ so there exists $j$ such that the right hand term is bigger than $\delta$ which means $f^{\circ j} (\x) \neq U$.
	\end{proof}
	
	\bibliographystyle{alpha}
	\bibliography{../../bibliographie}

\begin{thebibliography}{LHRRSS21}

\bibitem[Aba01a]{Abate2001}
Marco Abate.
\newblock An introduction to hyperbolic dynamical systems.
\newblock 2001.

\bibitem[Aba01b]{Abate01}
Marco Abate.
\newblock {The residual index and the dynamics of holomorphic maps tangent to
  the identity}.
\newblock {\em Duke Mathematical Journal}, 107(1):173 -- 207, 2001.

\bibitem[ABT08]{ABT04}
Marco Abate, Filippo Bracci, and Francesca Tovena.
\newblock Index theorems for holomorphic maps and foliations.
\newblock {\em Indiana Univ. Math. J.}, 57(7):2999--3048, 2008.

\bibitem[Bin15]{Binyamini}
Gal Binyamini.
\newblock Finiteness properties of formal {L}ie group actions.
\newblock {\em Transform. Groups}, 20(4):939--952, 2015.

\bibitem[BM03]{BrocheroMartinez}
Fabio~Enrique Brochero~Mart\'inez.
\newblock Groups of germs of analytic diffeomorphisms in {$(\Bbb C^2,0)$}.
\newblock {\em J. Dynam. Control Systems}, 9(1):1--32, 2003.

\bibitem[BMCLH08]{BMCLH}
F.~E. Brochero~Mart\'inez, F.~Cano, and L.~L\'opez-Hernanz.
\newblock Parabolic curves for diffeomorphisms in {$\Bbb C^2$}.
\newblock {\em Publ. Mat.}, 52(1):189--194, 2008.

\bibitem[CS82]{CamachoSad82}
César Camacho and Paulo Sad.
\newblock Invariant varieties through singularities of holomorphic vector
  fields.
\newblock {\em Annals of Mathematics}, 115(3):579--595, 1982.

\bibitem[\'E85]{EcalleT3}
Jean \'Ecalle.
\newblock {\em Les fonctions r\'esurgentes. {T}ome {III}}, volume 85-5 of {\em
  Publications Math\'ematiques d'Orsay [Mathematical Publications of Orsay]}.
\newblock Universit\'e{} de Paris-Sud, D\'epartement de Math\'ematiques, Orsay,
  1985.
\newblock L'\'equation du pont et la classification analytique des objects
  locaux. [The bridge equation and analytic classification of local objects].

\bibitem[Fat20]{Fatou1920}
P.~Fatou.
\newblock Sur les équations fonctionnelles.
\newblock {\em Bulletin de la Société Mathématique de France}, 48:208--314,
  1920.

\bibitem[Fc17]{Forstneric}
Franc Forstneri\v~c.
\newblock {\em Stein manifolds and holomorphic mappings}, volume~56 of {\em
  Ergebnisse der Mathematik und ihrer Grenzgebiete. 3. Folge. A Series of
  Modern Surveys in Mathematics [Results in Mathematics and Related Areas. 3rd
  Series. A Series of Modern Surveys in Mathematics]}.
\newblock Springer, Cham, second edition, 2017.
\newblock The homotopy principle in complex analysis.

\bibitem[Hak98]{Hakim}
Monique Hakim.
\newblock Analytic transformations of {$(\bold C^p,0)$} tangent to the
  identity.
\newblock {\em Duke Math. J.}, 92(2):403--428, 1998.

\bibitem[Lea97]{Leau}
L\'eopold Leau.
\newblock \'etude sur les \'equations fonctionnelles \`a une ou \`a plusieurs
  variables.
\newblock {\em Annales de la Facult\'e des sciences de Toulouse :
  Math\'ematiques}, 1e s{\'e}rie, 11(3):E25--E110, 1897.

\bibitem[LHR25]{LHR}
Lorena L\'opez-Hernanz and Rudy Rosas.
\newblock A flower theorem in dimension two.
\newblock {\em Ergodic Theory Dynam. Systems}, 45(10):3223--3254, 2025.

\bibitem[LHRRSS21]{LHRRSS}
Lorena L\'opez-Hernanz, Jasmin Raissy, Javier Rib\'on, and Fernando
  Sanz-S\'anchez.
\newblock Stable manifolds of two-dimensional biholomorphisms asymptotic to
  formal curves.
\newblock {\em Int. Math. Res. Not. IMRN}, (17):12847--12887, 2021.

\bibitem[LHRSV20]{LHRSV}
Lorena López-Hernanz, Javier Ribón, F.~Sanz, and Liz Vivas.
\newblock Stable manifolds of biholomorphisms in $c^n$ asymptotic to formal
  curves.
\newblock 02 2020.

\bibitem[Mol09]{Molino}
Laura Molino.
\newblock The dynamics of maps tangent to the identity and with nonvanishing
  index.
\newblock {\em Trans. Amer. Math. Soc.}, 361(3):1597--1623, 2009.

\bibitem[MP13]{MP13}
Michael Mcquillan and Daniel Panazzolo.
\newblock {Almost {\'E}tale resolution of foliations}.
\newblock {\em {Journal of Differential Geometry}}, 95(2):279--319, August
  2013.

\bibitem[Pan06]{Panazzolo}
Daniel Panazzolo.
\newblock Resolution of singularities of real-analytic vector fields in
  dimension three.
\newblock {\em Acta Math.}, 197(2):167--289, 2006.

\bibitem[Ron15]{Rong15}
Feng Rong.
\newblock New invariants and attracting domains for holomorphic maps in {$\bf
  C^2$} tangent to the identity.
\newblock {\em Publ. Mat.}, 59(1):235--243, 2015.

\bibitem[Sei68]{Seidenberg}
A.~Seidenberg.
\newblock Reduction of singularities of the differential equation
  {$A\,dy=B\,dx$}.
\newblock {\em Amer. J. Math.}, 90:248--269, 1968.

\bibitem[Viv12]{Vivas}
Liz Vivas.
\newblock Degenerate characteristic directions for maps tangent to the
  identity.
\newblock {\em Indiana Univ. Math. J.}, 61(6):2019--2040, 2012.

\end{thebibliography}

\end{document}